
\documentclass{amsproc}
\usepackage{amsfonts}
\usepackage{amssymb}
\usepackage{eurosym}

\setcounter{MaxMatrixCols}{10}

\newtheorem{theorem}{Theorem}[section]

\newtheorem{prop}[theorem]{Proposition}
\newtheorem{corollary}[theorem]{Corollary}
\theoremstyle{definition}
\newtheorem{definition}[theorem]{Definition}
\newtheorem{example}[theorem]{Example}

\newtheorem{problem}[theorem]{Problem}
\theoremstyle{remark}
\newtheorem{remark}[theorem]{Remark}
\numberwithin{equation}{section}

\input{tcilatex}

\begin{document}
\title{Polynomials on Parabolic Manifolds}
\author{Ayd\i n Aytuna}
\address{FENS. Sabanc\i\ University, 34956, Tuzla, \.{I}stanbul, Turkey.}
\email{aytuna@sabanciuniv.edu}
\author{Azimbay Sadullaev}
\address{Mathematics Department, National University of Uzbakistan, VUZ
GORODOK, 700174, Taskent, Uzbakistan}
\email{sadullaev@mail.ru}
\subjclass{Primary 32U05, 32U15, 46A61; Secondary 46A63}
\keywords{ parabolic manifolds, plurisubharmonic functions, pluripotential
theory, exhaustion functions, tame extention operators}
\maketitle

\begin{abstract}
A Stein manifold $X$ is called $S-parabolic$ if it possesses a
plurisubharmonic exhaustion function $\rho $ that is maximal outside a
compact subset of $X.$ In analogy with $(\mathbb{C}^{n},\ln |z|)$, one
defines the space of polynomials on a $S$-parabolic manifold $(X,\rho )$ as
the set of all analytic functions with polynomial growth with respect to $%
\rho $. In this work, which is, in a sense continuation of \cite{AS2}, we
will primarily study polynomials on $S$-parabolic Stein manifolds. In
Section \ref{sec1}, we review different notions of paraboliticity for Stein
manifolds, look at some examples and go over the connections between
parabolicity of a Stein manifold $X$ and certain linear topological
properties of the Fr\'{e}chet space of global analytic functions on $X$. In
Section \ref{sec2} we consider Lelong classes, associated Green functions
and introduce the class of polynomials in $S$-parabolic manifolds. In
Section \ref{sec3} we construct an example of a $S$-parabolic manifold, with
no nontrivial polynomials. This example leads us to divide $S$-parabolic
manifolds into two groups as the ones whose class of polynomials is dense in
the corresponding space of analytic functions and the ones whose class of
polynomials is not so rich. In this way we introduce a new notion of
regularity for $S$-parabolic manifolds. In the final section we investigate
linear topological properties of regular $S$-parabolic Stein manifolds and
show in particular that the space of analytic functions on such manifolds
have a basis consisting of polynomials. We also give a criterion for closed
submanifolds of a regular $S$-parabolic to be regular $S$-parabolic, in
terms of existence of tame extension operators for the spaces of analytic
functions defined on these submanifolds.
\end{abstract}








%
%
%


\section{Introduction}

\label{sec0} In the classical theory of Riemann surfaces one calls a Riemann
surface parabolic, in case every bounded (from above) subharmonic $(sh)$
function on $X$ reduces to a constant. Several authors introduced analogs of
these notions for general complex manifolds of arbitrary dimension in
different ways; in terms of triviality (parabolic type) and non-triviality
(hyperbolic type) of the Kobayashi or Caratheodory metrics, in terms of
plurisubharmonic $(psh)$ functions, etc. In this paper we will follow the
one dimensional tradition and call a complex manifold parabolic in case
every bounded from above plurisubharmonic function on it reduces to a
constant.

On the other hand, Stoll, Griffiths, King, et al. in their work on
Nevanlinna's value distribution theory in higher dimensions, introduced
notions of \textit{"parabolicity"} in several complex variables by requiring
the existence of \textit{special plurisubharmonic ($psh$) exhaustion
functions}. Following Stoll \cite{St2}, we will call an $n$-dimensional
complex manifold $X$, \textit{$S$-parabolic} in case there is a
plurisubharmonic function $\rho$ on $X$ with the properties:

\begin{itemize}
\item[a)] $\lbrace z\in X:\rho(z)<C\rbrace \subset \subset X, \, \forall
C\in \mathbb{R}$ (i.e. $\rho$ is exhaustive),

\item[b)] the Monge - Amp\`{e}re operator $(dd^c\rho)^n$ is zero off a
compact $K\subset\subset X$ . That is $\rho$ is maximal plurisubharmonic
function outside $K$.
\end{itemize}

If in addition we can choose $\rho$ to be continuous then we will say that $%
X $ is $S^*$-parabolic. Special exhaustion functions with certain regularity
properties play a key role in the Nevanlinna's value distribution theory of
holomorphic maps $f:X\to P^m$, where $P^m$ is the $m$-dimensional projective
manifold (see. \cite{GK},\cite{S2}, \cite{Sh}, \cite{St1}, \cite{St2}).

On the other hand, for manifolds which have a special exhaustion function
one can define extremal Green functions as in the classical case and apply
pluripotential theory techniques to obtain analogs of some classical results
(see \cite{S6}, \cite{Ze1}, \cite{Ze2}) and section \ref{sec2} below.

Most of the previous papers on the subject required additional smoothness
conditions for the special exhaustion functions. Note that we only
distinguish the cases when the special exhaustion function is continuous or
just plurisubharmonic. Also note that without the maximality condition b),
an exhaustion function $\rho(z)\in C(X)\cap psh(X)$ always exists for any
Stein manifold $X$. This follows from the fact that any Stein manifold $X$, $%
dim(X)=n$, can be properly embedded in $\mathbb{C}^{2n+1}_W$, hence one can
take for $\rho$ the restriction of $\ln|w|$ to $X$.

In this paper, we will primarily study polynomials in $S$-parabolic Stein
manifolds. Polynomials in $S^*$-parabolic manifolds were introduced by A.
Zeriahi in \cite{Ze1}. However, his investigations were more focused on
polynomials on affine algebraic varieties. In analogy with $(\mathbb{C}^n,
\ln|z|)$ one defines polynomials in a $S$-parabolic manifold $(X, \rho)$ as
the set of all analytic functions with polynomial growth with respect to $%
\rho$.

The organization of the paper is as follows: In \ref{sec1}, we review
different notions of paraboliticity for Stein manifolds, look at some
examples and go over the connections between parabolicity of a Stein
manifold $X$ and certain linear topological properties of the Fr\'{e}chet
space of global analytic functions on $X$. In \ref{sec2} we consider Lelong
classes, associated Green functions and introduce the class of polynomials
in $S$-parabolic manifolds. These two sections are written in a survey
style. In \ref{sec3} we construct an example of a $S^*$-manifold, with no
nontrivial polynomials. This example leads us to divide $S$-parabolic
manifolds into two groups as the ones whose class of polynomials is dense in
the corresponding space of analytic functions and the ones whose class of
polynomials is not so rich. In this way we introduce a new notion of
regularity for $S$-parabolic manifolds. In the final section we investigate
linear topological properties of regular $S$-parabolic Stein manifolds and
show in particular that the space of analytic functions on such manifolds
have a basis consisting of polynomials. In this section we also give a
criterion for closed submanifolds of a regular $S^*$-parabolic to be regular 
$S^*$-parabolic, in terms of existence of tame extension operators for the
spaces of analytic functions defined on these submanifolds.


\section{Parabolic manifolds}

\label{sec1} In this section we will review notions of parabolicity for
Stein manifolds, look at some examples and go over the relation between
parabolicity of a Stein manifold $X$ and certain linear topological
properties of the Fr\'echet space of global analytic functions on $X$.

\begin{definition}
\label{page1} A Stein manifold $X$ is called parabolic, in case it does not
possess a non-constant bounded above plurisubharmonic function.
\end{definition}

Thus, parabolicity of $X$ is equivalent to the following: if $u (z)\in psh
(X)$ and $u (z) < C$, then $u (z) \equiv const$ on $X$.

It is very convenient to describe parabolicity in term of well-known $%
\mathcal{P}$-measures of pluripotential theory \cite{AS2}, \cite{S3}. Let
our Stein manifold $X$ be properly imbedded in $\mathbb{C}^{2n+1}_w,n = \dim
X$, and denote by $\sigma (z)$ the restriction of $\ln |w|$ to $X$. Then $%
\sigma (z) \in C(X)\cap psh (X)$ and $\{\sigma (z) < C\}\subset\subset X,
\forall C \in R$. We assume $0$ is not in $X$ and that, min $\sigma (z) < 0$%
. We consider $\sigma$-balls $B_R = \{z \in X : \sigma (z) < \ln R\}$ and as
usual, define the class $\mathcal{U}(\overline{B}_1,B_R)= \{u\in psh(B_R) :
u|_{\overline{B}_1}<-1, \,\,u|_{B_R}<0 \}$. Then the function 
\begin{equation*}
\omega (z,\overline{B}_1,B_R)=\sup \{u(z):u\in\mathcal{U}(\overline{B}%
_1,B_R) \}
\end{equation*}
is called as $\mathcal{P}$-measure of the $\overline{B}_1$ with respect to
the domain $B_R$. $\mathcal{P}$-measure $\omega (z,\overline{B}_1,B_R)$ is
plurisubharmonic in $B_R,$ is equal to $-1$ on $\overline{B}_1$ and tends to 
$0$ for $z\rightarrow \partial B_R$. Moreover, it is maximal, i.e. $%
(dd^c\omega)^n=$ in $B_R\setminus\overline{B}_1$. Since $\omega (z,\overline{%
B}_1,B_R)$ decreases with $R \nearrow \infty$, and the limiting function
satisfies: 
\begin{equation*}
\omega (z,\overline{B}_1)=\lim_{R\rightarrow\infty}\omega (z,\overline{B}%
_1,B_R)\in psh(X),\,\, \omega (z,\overline{B}_1)|_{\overline{B}_1}\equiv
-1,\,\, \omega (z,\overline{B}_1)<0 \, \,\forall z \in X.
\end{equation*}
The proposition below, while not difficult to prove, is sometimes very
useful.

\begin{prop}
The Stein manifold $X$ is parabolic if and only if $\,\omega (z,\overline{B}%
_1) \,$ is trivial, i.e. $\,\omega (z,\overline{B}_1)\equiv -1\,.$
\end{prop}

We note, that triviality of $\omega (z,\overline{B}_1)$ does not depend upon 
$\overline{B}_1$; one can take instead of $\overline{B}_1$, any closed ball $%
\overline{B}_r$ or even, any pluriregular compact set $E\subset X$ (see \cite%
{AS2}).

\begin{definition}
A Stein manifold $X$ is called $S$-parabolic, if there exit exhaustion
function $\rho (z)\in psh (X)$ that is maximal outside a compact subset of $%
X $. If in addition we can choose $\rho (z)$ to be continuous then we will
say that $X$ is $S^*$-parabolic.
\end{definition}

A plurisubharmonic exhaustion function that is maximal outside a compact
subset will be referred to as special plurisubharmonic exhaustion. We will
tacitly assume, unless otherwise stated that special exhaustion functions
are maximal on the sets where they are strictly positive. It is not
difficult to see that $S$-parabolic manifolds are parabolic.

In fact, since the special exhaustion function $\rho (z)$ of a $S$-parabolic
manifold $(X, \rho )$ is maximal off some compact $K\subset\subset X$ we can
choose a positive $r$, so that $B_r=\{\rho (z)<\ln r\}$ contains $K$. For $%
R>r$ the $\mathcal{P}$-measure can be calculated as; 
\begin{equation*}
\omega (z,\overline{B}_1, B_R)=\left\{-1,\frac{\rho (z)-R}{R-r} \right\},
\end{equation*}
From here it follows, that $\lim_{R\rightarrow \infty}\omega (z,\overline{B}%
_1, B_R)\equiv -1.$

For open Riemann surfaces the notions of $S$-parabolicity, $S^*$%
-parabolicity and parabolicity coincide. This is a consequence of the
existence of Evans-Selberg potentials (subharmonic exhaustion functions that
are harmonic outside a given point) on parabolic Riemann surfaces \cite{SN}.
Authors do not know any prove of the following important problems in the
multidimensional case $n=\dim X >1$.

\begin{problem}
\label{problem1} Do the notions of $S$-parabolicity and $S^*$-parabolicity
coincides for the Stein manifolds of arbitrary dimension?
\end{problem}

\begin{problem}
\label{problem2} Do the notions of parabolicity and $S$-parabolicity
coincide for the Stein manifolds of arbitrary dimension?
\end{problem}

The prime example of an $S^*$-parabolic manifold is of course $\mathbb{C}^n$%
, with the special exhaustion function $\ln^+|z|$. Algebraic affine
manifolds, with their canonical special exhaustion functions as described in 
\cite{GK} also forms an important class of $S^*$-parabolic manifolds.

Another set of indicative examples could be obtained by considering closed
pluripolar subsets in $\mathbb{C}^n$ , whose complements are pseudoconvex.
Such sets are called \textit{``analytic multifunctions"} by some authors.
They are studied extensively and are extremely important in approximation
theory, in the theory of analytic continuation and in the description of
polynomial convex hulls (see \cite{AW,BR,N,O,S7,Sl1,Sl2} and others). It is
clear, that these sets are removable for the class of bounded
plurisubharmonic functions defined on their complements. Hence their
complements are parabolic Stein manifolds. We would like to state the
following special case of Problem \ref{problem2} above, with the hope that
it will be more tractable:

\begin{problem}
\label{problem3} Let $A$ be an analytic multifunction in $\mathbb{C}^n$. Is $%
X=\mathbb{C}^n\setminus A$, $S$-parabolic?
\end{problem}

In classical case, $n = 1$, every closed polar set $A\subset \mathbb{C}$ is
an analytic multifunction. As was remarked above, in this case $\mathbb{C}%
^n\setminus A$ is $S^*$-parabolic. On the other hand if $A= \{
p(z)=0\}\subset\mathbb{C}^n$ is an algebraic set, where $p$ is a polynomial,
assuming that $0\notin A$, it is not difficult to see that the function 
\begin{equation*}
\rho (z)=-\frac{1}{\deg p}\ln |p(z)|+2\ln |z|
\end{equation*}
gives rise to a special exhaustion function for $\mathbb{C}^n\setminus A$.
More generally we have:

\begin{theorem}[\protect\cite{AS2}]
\label{thm27} Let 
\begin{equation*}
A=\left \{z=(^{\prime }z,z_n)=(z_1,z_2,...,z_n)\in\mathbb{C}%
^n:\,\,z_n^k+f_1(^{\prime }z)z_n^{k-1}+...+f_k(^{\prime }z)=0 \right \}
\end{equation*}
be a Weierstrass polynomial (algebraiodal) set, where $f_j\in O(\mathbb{C}%
^{n-1})$ are entire functions, $j = 1,2,...,k,$ $k > 1$. Then $X=\mathbb{C}%
^n\setminus A$ is $S^*$-parabolic. Moreover, the function $\rho (z)=-\ln
|F(z)|+\ln\left(|z|^{\prime 2}+|F(z)-1|^2\right)$ is a plurisubharmonic
exhaustion function for $X,$ that is maximal outside a compact subset of $X$.
\end{theorem}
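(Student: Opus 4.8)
The plan is to exhibit the explicit function $\rho(z)=-\ln|F(z)|+\ln\left(|{}'z|^2+|F(z)-1|^2\right)$ as a special exhaustion function, where $F$ is the Weierstrass polynomial $F(z)=z_n^k+f_1({}'z)z_n^{k-1}+\cdots+f_k({}'z)$ defining $A$. There are three properties to verify: that $\rho$ is plurisubharmonic on $X=\mathbb{C}^n\setminus A$, that $\rho$ is an exhaustion (its sublevel sets are relatively compact in $X$), and that $\rho$ is maximal outside a compact subset of $X$. Since the claim is $S^*$-parabolicity, I must also check that $\rho$ is continuous on $X$; this is immediate from $F$ being entire and nonvanishing on $X$, so that $-\ln|F(z)|$ is pluriharmonic (hence continuous) there, and the second term is plainly continuous since $|{}'z|^2+|F(z)-1|^2>0$ always (the two summands cannot vanish simultaneously because that would force $F=1$ and ${}'z=0$, a single isolated configuration one checks directly).

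First I would address plurisubharmonicity. On $X$ the function $-\ln|F(z)|$ is pluriharmonic because $F$ is holomorphic and zero-free there. The second term $\ln\left(|{}'z|^2+|F(z)-1|^2\right)$ is the logarithm of a sum of squared moduli of holomorphic functions, namely $\ln\bigl(|z_1|^2+\cdots+|z_{n-1}|^2+|F(z)-1|^2\bigr)$, and such a $\log$-of-sum-of-$|g_j|^2$ is a standard plurisubharmonic function (it is $\log$ of the squared norm of a holomorphic map into $\mathbb{C}^n$, hence $psh$). So $\rho$ is a sum of a pluriharmonic and a plurisubharmonic function, giving $\rho\in psh(X)$.

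Next I would handle the exhaustion and maximality together, since they hinge on the same asymptotic analysis. For maximality, note that outside the compact set where $F-1$ and ${}'z$ are both small, the second term behaves like $\ln|F(z)-1|$ or like $\ln|{}'z|^2$ in the appropriate regions, and in either regime $\rho$ is locally a sum of pluriharmonic functions, hence maximal (a pluriharmonic function satisfies $(dd^c\rho)^n=0$). The careful point is to show that the region where the two summands in the argument of the logarithm are comparable — where genuine curvature could appear — is confined to a compact subset of $X$; away from that set one summand dominates and $\rho$ reduces to a pluriharmonic expression, yielding $(dd^c\rho)^n=0$. For the exhaustion property I must verify that $\rho(z)\to+\infty$ both as $z\to A$ (where $|F(z)|\to 0$ forces $-\ln|F(z)|\to+\infty$ while the second term stays bounded below) and as $|z|\to\infty$ within $X$; the latter requires showing that the growth of $\ln\left(|{}'z|^2+|F(z)-1|^2\right)$ outpaces any decay of $-\ln|F(z)|$, using that $F$ is a monic polynomial of degree $k$ in $z_n$.

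The main obstacle will be the exhaustion estimate at infinity, specifically controlling $\rho$ along sequences where $|z|\to\infty$ but $F(z)$ does not blow up — for instance sequences approaching $A$ at infinity, where $z_n$ tracks a root of the Weierstrass polynomial while $|{}'z|\to\infty$. Along such sequences $-\ln|F(z)|$ may tend to $+\infty$, but one must ensure $\{\rho<c\}$ does not escape to the ends of $X$, i.e. that it stays away from $A$ and bounded in $\mathbb{C}^n$. The key is that the $|{}'z|^2$ contribution inside the logarithm guarantees $\rho\to+\infty$ whenever $|{}'z|\to\infty$, regardless of the behavior of $F$, so combined with the blow-up of $-\ln|F|$ near $A$ and a direct estimate in the remaining central region, the sublevel sets are forced to be relatively compact in $X$. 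I would carry out this case analysis — $z$ near $A$, $|{}'z|$ large, and the bounded complementary region — as the technical heart of the argument, quoting the earlier Theorem \ref{thm27} reference structure and the elementary single-variable case $\rho(z)=-\frac{1}{\deg p}\ln|p(z)|+2\ln|z|$ as the guiding prototype.
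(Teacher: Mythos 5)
The paper itself gives no proof of this theorem --- it is quoted from \cite{AS2} --- so your proposal can only be judged against correctness and the standard argument. Your plurisubharmonicity step and your exhaustion plan are essentially sound: on $\{\rho\le C\}$ the defining inequality reads $|{}'z|^2+|F(z)-1|^2\le e^C|F(z)|$, and since $(|F|-1)^2\le|F-1|^2$ this pinches $|F|$ between two positive constants, which first bounds $|{}'z|$ and then, by monicity of $F$ in $z_n$ with coefficients bounded on $\{|{}'z|\le R\}$, bounds $|z_n|$; so sublevel sets are bounded and uniformly away from $A$. (Your worry about sequences tracking roots of $F$ at infinity is misplaced --- there $-\ln|F|\to+\infty$ helps you; the only delicate regime is $|F|\to\infty$, which the $|F-1|^2$ term compensates, as you note.) However, your parenthetical claim that $|{}'z|^2+|F(z)-1|^2$ never vanishes is false: $\{{}'z=0,\ F=1\}$ is the root set of the monic degree-$k$ polynomial $F(0,\cdot)-1$, hence a nonempty finite subset of $X$, and $\rho=-\infty$ there. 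This is not cosmetic: a plurisubharmonic exhaustion cannot satisfy $(dd^c\rho)^n\equiv 0$ on all of $X$ (compare $\rho$ on a sublevel set $B_R$ with the constants $R-\varepsilon$; maximality would force $\rho\ge R$ on $B_R$), so the Monge--Amp\`ere mass must live somewhere, and it lives exactly at these finitely many points.

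The genuine gap is your mechanism for maximality. Domination of one summand inside the logarithm does not make $\rho$ pluriharmonic, nor approximately so in any sense that controls $(dd^c\rho)^n$ --- for instance $\ln(|z_1|^2+|z_2|^2)$ is nowhere pluriharmonic yet has vanishing Monge--Amp\`ere off the origin --- and, worse, the region where the two summands are comparable is \emph{not} compact: for $n=2$, $F=z_2^2$, the points $z_1=z_2^2-1$ with $|z_2|\to\infty$ have the two summands equal, so your plan of confining the ``curvature'' to a compact set fails at the outset. The correct mechanism is a rank argument: set $h=(z_1,\dots,z_{n-1},F-1)$, so that on $X$ (where $-\ln|F|$ is pluriharmonic) one has $dd^c\rho=dd^c\ln\|h\|^2$, and on $\{h\ne 0\}$ this is the pullback under $[h]:X\to\mathbb{P}^{n-1}$ of the Fubini--Study form; a form pulled back from an $(n-1)$-dimensional manifold has rank at most $n-1$, whence $(dd^c\rho)^n\equiv 0$ on all of $X\setminus\{h=0\}$, with no case analysis at all, and the exceptional set $\{h=0\}=\{{}'z=0,\ F=1\}$ is precisely the finite set above. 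With this substitution your outline becomes a proof; as written, the maximality step would not survive.
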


For more examples of parabolic manifolds we refer the reader to \cite{AS2}.

It turns out that the paraboliticity of a Stein manifold $X$ and certain
linear topological properties of the Fr\'{e}chet space of analytic functions
on $X$ are connected. We will end this section by reviewing some results
obtained in this context.

As usual, the topology on the space of analytic functions on a complex
manifold $X$, $O\left( X\right) $ is the topology of uniform convergence on
compact subsets of $X,$ which makes $O\left( X\right) $ a nuclear Fr\'{e}%
chet space. We start by recalling the $DN$ condition of Vogt from the
structure theory of Fr\'{e}chet spaces;

\begin{definition}
A Fr\'{e}chet space $Y$ has the \textit{property $DN$} in case for a system $%
\left( \left\Vert {\cdot}\right\Vert _{k}\right) $ of seminorms generating
the topology of $Y$ one has:%
\begin{equation*}
\exists \text{ }k_{0}\text{ such that }\forall p\text{ }\exists q\text{ ,}%
\,\,C>0:\,\,\left\Vert x\right\Vert _{p}\leq C\left\Vert x\right\Vert
_{k_{0}}^{\frac{1}{2}}\left\Vert x\right\Vert _{q}^{\frac{1}{2}}\text{ ,}%
\forall x\epsilon Y
\end{equation*}
\end{definition}

This condition does not depend on the choice of generating seminorms. For
this and related linear topological invariants we refer the reader to the
book (\cite{VM}).

The first result we will state is an adaptation of a result from (\cite{A4})
part of which were proved by D.Vogt, V.Zaharyuta, and the first author
independently.

\begin{theorem}
\label{thm29} For a Stein manifold $X$of dimension $n$, the following
conditions are equivalent:

1. $X$ is parabolic

2. $O\left( X\right) $ has the property $DN$

3. $O\left( X\right) $ is isomorphic as Fr\'{e}chet spaces to $O\left(
C^{n}\right) .$
\end{theorem}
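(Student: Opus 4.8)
The plan is to prove the cycle of implications $(1)\Rightarrow(2)\Rightarrow(3)\Rightarrow(1)$, since $(3)\Rightarrow(1)$ is essentially immediate. Indeed, if $O(X)\cong O(\mathbb{C}^n)$ as Fréchet spaces, then $X$ and $\mathbb{C}^n$ share all isomorphism-invariant linear topological properties; in particular, since $\mathbb{C}^n$ is parabolic, one wants to conclude $X$ is parabolic. This direction is cleanest to obtain by first establishing the equivalence $(1)\Leftrightarrow(2)$ and noting that property $DN$ is a linear topological invariant, so $O(\mathbb{C}^n)\cong O(\mathbb{C}^n)$ having $DN$ forces $O(X)$ to have $DN$, whence $X$ is parabolic by $(2)\Rightarrow(1)$. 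Thus the real content is in $(1)\Leftrightarrow(2)$ and $(2)\Rightarrow(3)$.

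For the core equivalence $(1)\Leftrightarrow(2)$, my approach is to translate the $DN$ inequality into a statement about plurisubharmonic functions via the $\mathcal{P}$-measure machinery recalled earlier in this section. Fix a proper embedding of $X$ into $\mathbb{C}^{2n+1}_w$ and let $\sigma=\ln|w|\restriction_X$, giving the $\sigma$-balls $B_R=\{\sigma<\ln R\}$ and the fundamental system of seminorms $\|f\|_R=\sup_{\overline{B}_R}|f|$ on $O(X)$. The key is the classical identification (going back to the work underlying Theorem \ref{thm29}) of the interpolating $DN$-type inequality with a two-constants / Hadamard three-circles estimate controlled by the $\mathcal{P}$-measure $\omega(z,\overline{B}_1,B_R)$. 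Concretely, one shows that $\log\|f\|_R$ is, up to the geometry of these balls, governed by $\omega$, so that the existence of the interpolation inequality $\|f\|_p\le C\|f\|_{k_0}^{1/2}\|f\|_q^{1/2}$ for all $f\in O(X)$ is equivalent to a uniform lower bound on $\omega(z,\overline{B}_1)$, i.e. to its triviality $\omega(z,\overline{B}_1)\equiv -1$. By the Proposition above, that triviality is exactly parabolicity of $X$. The point is that a nontrivial bounded plurisubharmonic function (equivalently a nontrivial limit $\mathcal{P}$-measure) produces, by exponentiation and approximation by analytic functions, a sequence of functions violating the $DN$ inequality, while parabolicity yields the log-convexity estimate that supplies $DN$.

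For $(2)\Rightarrow(3)$ I would invoke the structure theory of nuclear Fréchet spaces. Since $O(X)$ is always a nuclear Fréchet space, and for a Stein manifold $X$ one knows $O(X)$ is moreover a power series space type object once we control its diametral dimension, the strategy is to show that $O(X)$ has both property $DN$ (given) and its dual companion property $\overline{\Omega}$, and that together with nuclearity and the known asymptotic behaviour of the Kolmogorov diameters these force $O(X)\cong\Lambda_\infty(\alpha)$ for an exponent sequence $\alpha$ with $\alpha_n\sim n^{1/n}$-type growth matching $O(\mathbb{C}^n)$. The cleanest route is to cite that for parabolic $X$ the diametral dimension of $O(X)$ coincides with that of $O(\mathbb{C}^n)$ (computable from the $\mathcal{P}$-measure estimates above), and then apply Vogt's isomorphism theorem: a nuclear Fréchet space with property $DN$ and the same diametral dimension as an infinite-type power series space, together with $\overline{\Omega}$, is isomorphic to that power series space. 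Since $O(\mathbb{C}^n)\cong\Lambda_\infty(\alpha)$ with the relevant $\alpha$, we obtain $O(X)\cong O(\mathbb{C}^n)$.

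I expect the main obstacle to be the precise quantitative link in $(1)\Leftrightarrow(2)$ between the analytic $DN$ inequality and the potential-theoretic triviality of $\omega$: one must pass from a sup-norm interpolation inequality for holomorphic functions to a pointwise plurisubharmonic estimate and back, which requires an approximation argument (producing holomorphic functions whose logarithmic growth mimics a given bounded plurisubharmonic function, e.g. via Hörmander-type $L^2$ estimates or a Bremermann-type approximation of $\omega$ by $\frac{1}{m}\log|f_m|$) and a careful bookkeeping of the ball radii $1$, $r$, $R$ so that the three-seminorm inequality matches the three-circles geometry exactly. The $(2)\Rightarrow(3)$ step is then mostly a matter of correctly citing the splitting/isomorphism theorems of Vogt together with the diametral dimension computation, so the delicate part is really the two-constants estimate. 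Because the theorem is attributed to prior work of Vogt, Zaharyuta, and the first author, I would keep the exposition at the level of assembling these ingredients rather than reproving the structure theory.
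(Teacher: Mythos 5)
The paper itself contains no proof of Theorem \ref{thm29}: it is stated survey-style as an adaptation of results from \cite{A4}, proved independently by Vogt, Zaharyuta and the first author, so there is no in-paper argument to compare against. Your outline reconstructs precisely the standard proof from that cited literature --- the two-constants/$\mathcal{P}$-measure translation for $(1)\Leftrightarrow(2)$ and the structure theory (property $DN$ plus $\Omega$ plus diametral dimension, then Vogt's characterization of $\Lambda_{\infty}(\alpha)$) for $(2)\Rightarrow(3)$ --- and it is sound in outline.

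Two places deserve more care than your sketch gives them. First, in $(2)\Rightarrow(1)$ the $DN$ inequality applies only to \emph{global} $f\in O(X)$, whereas the Lelong--Bremermann competitors $\frac{1}{\alpha_m}\ln|f_m|$ for $\omega(z,\overline{B}_1,B_R)$ live on $B_R$; you must use that the $\sigma$-sublevel sets are Runge in $X$ to transfer the estimate, and even then $DN$ with its fixed exponents $\frac12$, $\frac12$ only yields $\omega(z,\overline{B}_1)\leq -\frac12$. Your phrase ``uniform bound, i.e.\ triviality'' glosses the needed upgrading step: if $\omega(z,\overline{B}_1)\leq -c<0$ everywhere, then $(\omega+c)/(1-c)$ is again a competitor for each $\omega(\cdot,\overline{B}_1,B_R)$, and passing to the limit forces $\omega\equiv -1$. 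Second, in $(2)\Rightarrow(3)$ you should note that $\Omega$ holds for every Stein $X$ because $O(X)$ is a quotient of $O(\mathbb{C}^{2n+1})$ via a proper embedding, that the diametral dimension identity $\Delta(O(X))=\Delta(O(\mathbb{C}^n))$ holds for \emph{all} $n$-dimensional Stein manifolds independently of parabolicity (cf.\ \cite{A5}, whose diameter asymptotics $e^{-m^{1/n}}$ the paper also uses in Section \ref{sec2}), and that Vogt's isomorphism theorem needs stability of the exponent sequence $\alpha_m = m^{1/n}$ (which holds; your ``$\alpha_n\sim n^{1/n}$'' should read $\alpha_m \asymp m^{1/n}$). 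With these repairs --- plus Dini's theorem for the decreasing upper semicontinuous family $\omega(\cdot,\overline{B}_1,B_R)\downarrow -1$ to obtain the single $k_0$ in $(1)\Rightarrow(2)$ --- your assembly is exactly the argument the paper is pointing to.
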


\bigskip

Mitiagin and Henkin, in their seminal paper ( \cite{MH}) initiated a program
which they called "linearization of the basic theorems of complex analysis".
One of the problems they considered (in connection with Remmert's theorem)
was the possibility of finding continuous linear right inverse operators to
the restriction operator for analytic functions defined on closed complex
submanifolds of $\mathbb{C}^{N}.$ In other words for a closed complex
submanifold $V$ of some $\mathbb{C}^{N},\,$denoting by $R$ the restriction
operator from $O\left( \mathbb{C}^{N}\right) $ onto $O\left( V\right) $ the
query was to find a continuous linear (extension) operator $E:O\left(
V\right) \rightarrow O\left( \mathbb{C}^{N}\right) $ such that $R\circ
E=Identity$ on $O\left( V\right) .$ Mitiagin and Henkin stated ( Proposition
6.5 \cite{MH}) that this was possible in case $O\left( V\right) $ is
isomorphic to $O\left( \mathbb{C}^{n}\right) , $ $n=\dim V.$ A complete
answer to this query was given by Vogt \cite{V1} (see also \cite{V2}, \cite%
{V3}), which in our terminology reads as follows:

\begin{theorem}
\label{thm210} A Stein manifold is parabolic if and only if whenever it is
embedded into a Stein manifold as a closed submanifold, it admits a
continuous linear extension operator.
\end{theorem}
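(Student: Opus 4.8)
The plan is to prove the two implications separately, using the linear-topological characterization of parabolicity from Theorem \ref{thm29} together with the splitting theory for short exact sequences of Fr\'echet spaces developed by Vogt (\cite{V1}, \cite{VM}).

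First I would establish the \emph{necessity} of parabolicity. Assume $V$ admits a continuous linear extension operator whenever it sits as a closed submanifold of a Stein manifold. Since $V$ is Stein of dimension $n$, it embeds properly as a closed submanifold of $\mathbb{C}^{2n+1}$; by hypothesis there is a continuous linear operator $E:O(V)\to O(\mathbb{C}^{2n+1})$ with $R\circ E=\mathrm{id}_{O(V)}$, where $R$ denotes restriction. Then $E\circ R$ is a continuous idempotent on $O(\mathbb{C}^{2n+1})$ whose range $E(O(V))$ is linearly homeomorphic to $O(V)$ (injectivity and continuity of the inverse of $E$ follow from $R\circ E=\mathrm{id}$). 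Thus $O(V)$ is isomorphic to a complemented, hence closed, subspace of $O(\mathbb{C}^{2n+1})$. Since $\mathbb{C}^{2n+1}$ is parabolic, $O(\mathbb{C}^{2n+1})$ has property $DN$ by Theorem \ref{thm29}, and $DN$ passes to arbitrary closed subspaces; hence $O(V)$ has $DN$, and Theorem \ref{thm29} gives that $V$ is parabolic.

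For the \emph{sufficiency}, assume $V$ is parabolic and let $V$ be embedded as a closed submanifold of an arbitrary Stein manifold $W$. By Theorem \ref{thm29}, $O(V)$ has property $DN$. The restriction map $R:O(W)\to O(V)$ is surjective (Cartan's Theorem B for the coherent ideal sheaf of $V$), so we obtain a topologically exact sequence
\begin{equation*}
0\longrightarrow I_V \longrightarrow O(W)\xrightarrow{\,R\,} O(V)\longrightarrow 0,
\end{equation*}
where $I_V=\{f\in O(W):f|_V=0\}$. Producing an extension operator is exactly producing a continuous linear right inverse to $R$, i.e.\ splitting this sequence. By Vogt's splitting theorem, the sequence splits provided the quotient $O(V)$ has $DN$ --- which we have --- and the kernel $I_V$ has property $\Omega$. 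It is convenient to first reduce the target to a Euclidean space: properly embed $W$ as a closed submanifold of some $\mathbb{C}^M$, and observe that composing a putative extension $\widetilde E:O(V)\to O(\mathbb{C}^M)$ with the surjective restriction $O(\mathbb{C}^M)\to O(W)$ yields an extension operator into $O(W)$. Hence it suffices to split the sequence for $W=\mathbb{C}^M$.

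The main obstacle is therefore to verify that the kernel $I_V$ satisfies $\Omega$. Here I would exploit the structure of the ideal sheaf $\mathcal{I}_V$ of the closed submanifold $V\subset\mathbb{C}^M$: it is coherent and, in suitable local coordinates, generated by $M-n$ of the coordinate functions, so that $I_V$ is realized as the global section space of a coherent sheaf on a Stein manifold. Establishing property $\Omega$ for such a section space --- via quantitative Cauchy-type estimates on the growth of sections with respect to a plurisubharmonic exhaustion of $\mathbb{C}^M$ --- is the technical heart of the argument and is precisely the estimate carried out in \cite{V1}. Granting it, Vogt's splitting theorem applies and delivers the desired extension operator. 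I expect the abstract reductions and the identification of the problem with a splitting problem to be routine, while the $\Omega$-estimate for $I_V$ should be the delicate step.
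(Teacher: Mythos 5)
You should note at the outset that the paper does not actually prove Theorem \ref{thm210}: it quotes the result from Vogt \cite{V1} (see also \cite{V2}, \cite{V3}). The relevant comparison is therefore with Vogt's argument, and your proposal is essentially a correct reconstruction of that standard splitting-theoretic proof. The necessity direction is sound as written: an extension operator for one proper embedding $V\hookrightarrow\mathbb{C}^{2n+1}$ exhibits $O(V)$ as a (complemented) subspace of $O(\mathbb{C}^{2n+1})$, property $DN$ is inherited by subspaces, and Theorem \ref{thm29} converts $DN$ back into parabolicity. The sufficiency direction is also correctly organized: the reduction to $W=\mathbb{C}^{M}$ by composing with the restriction $O(\mathbb{C}^{M})\to O(W)$ is valid, the sequence $0\to I_V\to O(\mathbb{C}^{M})\xrightarrow{R} O(V)\to 0$ is topologically exact by Theorem B and the open mapping theorem, all spaces are nuclear, and with $O(V)\in(DN)$ the $(DN,\Omega)$ splitting theorem yields the continuous linear right inverse, i.e.\ the extension operator.

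Two remarks on the step you single out as the technical heart. First, a citation nuance: the $(DN,\Omega)$ splitting theorem and the $\Omega$-side analysis belong to \cite{V2}, \cite{V3} and the Vogt--Wagner circle of results; \cite{V1} contains the subspace/$DN$ characterization used in your necessity direction, so "precisely the estimate carried out in \cite{V1}" is misplaced. Second, and more usefully, verifying $\Omega$ for $I_V$ does not require redoing quantitative Cauchy-type estimates on sections: since $\mathcal{I}_V$ is coherent, Cartan's Theorem A (with the Forster--Ramspott bound, to get finitely many global generators) provides a sheaf surjection $\mathcal{O}^{k}\to\mathcal{I}_V\to 0$; exactness of global sections on the Stein space $\mathbb{C}^{M}$ together with the open mapping theorem makes $O(\mathbb{C}^{M})^{k}\to I_V$ a topological surjection; and $\Omega$ --- which, dually to $DN$, passes to quotients rather than subspaces --- is then inherited by $I_V$ from $O(\mathbb{C}^{M})^{k}$. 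With that substitution your outline closes completely, and it is the same route as the cited proof rather than a genuinely different one.
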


\bigskip

We now wish to pass to a more refined category of Fr\'{e}chet spaces. Recall
that a graded Fr\'{e}chet space is a tuple $\left( Y,{\left\Vert {\cdot}%
\right\Vert _{s}}\right), $ where $Y$ is a Fr\'{e}chet space and $\left(
\left\Vert {\cdot}\right\Vert _{s}\right) $ is a fixed system of seminorms
on $Y$ defining the topology. The morphisms in this category are tame linear
operators.

\begin{definition}
A continuous linear operator $T$ between two graded Fr\'{e}chet spaces $%
\left( Y,{\left\Vert {\cdot}\right\Vert _{s}}\right) $ and $\left( Z,{%
\left\vert {\cdot}\right\vert _{k}}\right) $ is said to be \textit{tame} in
case:%
\begin{equation*}
\exists \text{ }A>0\text{ }\forall k\text{ }\exists \text{ }C>0:\left\vert
T\left( x\right) \right\vert _{k}\leq C\left\Vert x\right\Vert _{k+A}.
\end{equation*}
\end{definition}

Two graded Fr\'{e}chet spaces are called tamely isomorphic in case there is
a one to one tame linear operator from one onto the other whose inverse is
also tame.

On a Stein manifold $X$, each exhaustion $\left( K_{s}\right) _{s=1}^{\infty
}$ of holomorphically convex compact sets with $K_{s}\subset \subset int
K_{s+1},\,\,$ $s=1,2,..$, induces a grading $\left\{ \left\Vert {\cdot}%
\right\Vert _{K_{s}}\right\} $ on $O\left( X\right) $ by considering the sup
norms on these compact sets.

\begin{theorem}
(\cite{AS2}) A Stein manifold of dimension $n$ is ${S}^{*}$-parabolic if and
only if there exits an exhaustion $\,\left( K_{s}\right) _{s=1}^{\infty }\,\,
$of $X\,$ such that the graded spaces $\left( O\left( X\right) ,{\left\Vert {%
\cdot}\right\Vert _{K_{s}}}\right) $ and $\left( O\left( \mathbb{C}%
^{n}\right) ,{\left\Vert {\cdot}\right\Vert _{P_{s}}}\right) $ are tamely
isomorphic, where $P_{s}= \left( z\in \mathbb{C}^{n}:\,\,\left\Vert
z\right\Vert \leq e^{s}\right) $, \newline
$s=1,2,... $ .
\end{theorem}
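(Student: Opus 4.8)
The plan is to prove the two implications separately and to let Theorem~\ref{thm29} do the coarse work: since $S$-parabolicity implies parabolicity, and parabolicity is equivalent to property $DN$, we already have an (ungraded) isomorphism $O(X)\cong O(\mathbb{C}^n)$. The content of the statement is therefore the \emph{tame} refinement, and this is where the special exhaustion function (and its continuity) enters through pluripotential theory.

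\textbf{From $S^{*}$-parabolicity to a tame isomorphism.} Let $\rho$ be a continuous special exhaustion function, normalized to be maximal on $\{\rho>0\}$, and set $K_s=\{z\in X:\rho(z)\le s\}$ for $s\ge 0$. Since $\rho$ is a continuous $psh$ exhaustion on a Stein manifold, each $K_s$ is holomorphically convex, so $(K_s)$ is an admissible exhaustion and $\{\|\cdot\|_{K_s}\}$ is a grading on $O(X)$. The first step is a Hadamard three-circles estimate: because $\rho$ is maximal off $K_0$, for every $f\in O(X)$ the map $s\mapsto\ln\|f\|_{K_s}$ is convex on $[0,\infty)$, by comparing the subharmonic $\ln|f|$ with the maximal $\rho$ on the pseudo-annuli $\{s<\rho<t\}$. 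This convexity says precisely that $\{\|\cdot\|_{K_s}\}$ is an interpolating (Hilbert-scale) grading, exactly as the polydisc grading $\{\|\cdot\|_{P_s}\}$ on $O(\mathbb{C}^n)$ is by the classical three-circles theorem (there $\ln\|z^\alpha\|_{P_s}$ is affine in $s$ with slope $|\alpha|$). The second step is to pass to the $L^2$-gradings (Bergman spaces) attached to the $K_s$, orthogonalize to obtain a common basis, and read off the eigenvalue (norm) asymptotics, to be compared with the monomials, for which the number of $\alpha$ with $|\alpha|\le d$ is $\sim d^n/n!$.

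\textbf{Main obstacle.} The crux is to show that the growth of the basis of $O(X)$ matches that of the monomials up to tame (bounded-shift) equivalence, i.e. that the spectrum attached to $(K_s)$ has the same $d^n/n!$ counting asymptotics as $\mathbb{C}^n$. The interpolation property reduces the tame classification to this counting, but the counting is a genuine pluripotential-theoretic estimate: it is governed by the Monge--Amp\`ere mass of the sublevel sets of $\rho$, via a Bernstein--Walsh inequality $|f(z)|\le\|f\|_{K_0}\,e^{d\rho(z)}$ for the relevant growth classes and the asymptotics of $\dim\{f:\ln|f|\le d\rho+O(1)\}$. Care is needed because, in the non-regular case, such growth classes need not be dense, so the matching must be extracted from the norms of an arbitrary basis rather than from genuine polynomials.

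\textbf{From a tame isomorphism to $S^{*}$-parabolicity.} Conversely, suppose $\Phi:O(\mathbb{C}^n)\to O(X)$ is a tame isomorphism for some exhaustion $(K_s)$, with shift constant $A$. Put $g_\alpha=\Phi(z^\alpha)$; tameness of $\Phi$ and $\Phi^{-1}$ gives two-sided bounds of the form $\|g_\alpha\|_{K_s}\le C_s\,e^{(s+A)|\alpha|}$ together with a matching lower control coming from $\|z^\alpha\|_{P_k}\le C'_k\|g_\alpha\|_{K_{k+A}}$. I would then recover a special exhaustion function as a Siciak--Zaharyuta extremal function built from these transferred monomials,
\begin{equation*}
\rho(z)=\Big(\sup\big\{\tfrac{1}{|\alpha|}\big(\ln|g_\alpha(z)|-\ln\|g_\alpha\|_{K_0}\big):\ |\alpha|\ge 1\big\}\Big)^{*},
\end{equation*}
with upper-semicontinuous regularization. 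The two-sided tame estimates make $\rho$ finite and proper (exhaustive), its definition as a regularized supremum of $psh$ functions makes it $psh$, and the fact that it is the envelope of a Lelong-type class makes it maximal on $\{\rho>0\}$. The obstacle here is softer but real: verifying continuity and properness of $\rho$, which requires a Bernstein--Markov comparability of sup-norms over the $K_s$ and separation of points by the $g_\alpha$, furnished by surjectivity of $\Phi$. Since the resulting $\rho$ is then a continuous special exhaustion, $X$ is $S^{*}$-parabolic, completing the equivalence.
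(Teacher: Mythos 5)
The paper does not actually prove this statement; it is quoted verbatim from \cite{AS2}, and the only internal evidence of the intended proof is in the last section, where the authors invoke ``the construction of the proof of Th.1.5 \cite{V3}'' together with Hilbert scales and Zaharyuta's interpolation theorem. So the intended route is the common-basis/tame power-series-representation machinery, and your forward direction is in that spirit. Nevertheless your proposal has two genuine gaps. First, in the converse direction, the maximality claim is wrong as stated: your $\rho$ is the regularized supremum of the \emph{countable subfamily} $\{u_\alpha\}$ of transferred monomials, not the envelope of a full Lelong class, and envelopes of subfamilies need not be maximal. Maximality of $V^{*}(\cdot,K)$ off $K$ rests on the class $\mathcal{L}_{\rho}(K)$ being stable under replacing a member by its Perron--Bremermann lift on a ball disjoint from $K$; your family is not stable under that operation. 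Worse, one cannot even \emph{define} the relevant Lelong class before having a special exhaustion function to measure growth --- which is exactly what is being constructed, so the appeal to ``envelope of a Lelong-type class'' is circular. There is also a normalization slip: tameness of $\Phi^{-1}$ bounds $\|g_\alpha\|_{K_A}$ from below (via $\|z^\alpha\|_{P_0}\leq C\,\|g_\alpha\|_{K_A}$), not $\|g_\alpha\|_{K_0}$, which may be arbitrarily small, so dividing by it can destroy local upper boundedness of the family. (Properness, which you flag as an obstacle, actually has a clean argument you miss: if $\rho\leq M$ along $z_j\to\infty$, expanding any $f$ in the basis $\{g_\alpha\}$ and using the tame bounds gives $|f(z_j)|\leq C\|f\|_{K_N}$ for a fixed $N$ and all $f\in O(X)$, so the $z_j$ lie in the $O(X)$-hull of $K_N$, compact by holomorphic convexity --- contradiction.)

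Second, in the forward direction you fix $K_s=\{\rho\leq s\}$ at the outset and then try to \emph{prove} that the exponent sequence matches the monomial counting $|\sigma(m)|\sim (n!\,m)^{1/n}$. For that particular exhaustion this is in general false: by the asymptotic theorem quoted from \cite{AS2}, the exponents behave like $c\,m^{1/n}$ with $c$ determined by $\int_X (dd^c\rho)^n$, and $\Lambda_\infty(\alpha)$ and $\Lambda_\infty(c\alpha)$ are isomorphic but \emph{not} tamely isomorphic when $c\neq 1$, since a bounded shift $k\mapsto k+A$ cannot absorb a dilation of the grading. This is precisely why the theorem only asserts that \emph{some} exhaustion exists: one must first rescale $\rho$ by the constant that normalizes the Monge--Amp\`ere mass to the Euclidean value and take the sublevel sets of the rescaled function. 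That step is absent from your plan, and what you label the ``main obstacle'' --- the $m^{1/n}$ norm asymptotics and the tame power-series representation --- is the actual analytic content of the theorem, left unproved; so the forward half as written is a program rather than a proof.
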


\bigskip

This result displays the similarities between function theories on ${S}^{* }$%
-parabolic manifolds and the complex Euclidean spaces, however finding tame
isomorphisms to the space of entire functions may not be an easy task. On
the other hand graded Fr\'{e}chet spaces tamely isomorphic to \textit{%
infinite type power series spaces }were studied by various authors ( \text{
see for example, \cite{poppenbergt} }) and linear topological conditions
that ensure the existence of such isomorphisms were obtained. Recall that
for an exponent sequence $\alpha =\left( \alpha _{m}\right) _{m}$ ; $\alpha
_{m}\uparrow \infty ,$ the power series space of infinite type is the graded
Fr\'{e}chet space 
\begin{equation*}
\Lambda _{\infty }\left( \alpha \right) =\left\{ \mathbf{\xi =}\left( \xi
_{m}\right) _{m}:\left\vert \mathbf{\xi }\right\vert _{k}\doteq
\sum\limits_{m=1}^{\infty }\left\vert \xi _{m}\right\vert e^{k\alpha
_{m}}<\infty ,\text{ }k=1,2,...\right\}
\end{equation*}%
equipped with the grading $\left( \left\vert {\cdot}\right\vert _{k}\right)
_{k=1}^{\infty }.$

\begin{theorem}
( \cite{AS2}) A Stein manifold $X$of dimension $n$ is ${S}^{* }$-parabolic
in case there exits an exhaustion $\left( K_{s}\right) _{s=1}^{\infty }$of $X
$ such that $\left( O\left( X\right) ,{\left\Vert {\cdot}\right\Vert _{K_{s}}%
}\right) $ is tamely isomorphic to an infinite type power series space of
infinite type.
\end{theorem}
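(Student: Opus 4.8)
The plan is to reduce the statement to the previously stated characterization of $S^*$-parabolicity, namely that $X$ is $S^*$-parabolic precisely when there is an exhaustion $(K_s)$ for which $(O(X),\|\cdot\|_{K_s})$ is tamely isomorphic (henceforth written $\cong_t$) to $(O(\mathbb{C}^n),\|\cdot\|_{P_s})$, with $P_s=\{z:\|z\|\le e^s\}$. The first observation is that the model space is itself an infinite type power series space: expanding in monomials $z^{\beta}$ and using the Cauchy inequalities together with the maximum principle, the Taylor coefficient map is a tame isomorphism of $(O(\mathbb{C}^n),\|\cdot\|_{P_s})$ onto $\Lambda_\infty(\alpha_0)$, where $\alpha_0$ is the nondecreasing rearrangement of the degrees $\{|\beta|:\beta\in\mathbb{N}_0^n\}$; counting monomials gives $\alpha_{0,j}\sim (n!\,j)^{1/n}$. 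Hence the task becomes: starting from a tame isomorphism $(O(X),\|\cdot\|_{K_s})\cong_t\Lambda_\infty(\alpha)$, show that after an admissible regrading of the exhaustion $\Lambda_\infty(\alpha)\cong_t\Lambda_\infty(\alpha_0)$, and then invoke the previous theorem.

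As a warm-up I would first record that every $\Lambda_\infty(\alpha)$ has the property $DN$: for $\|\xi\|_p=\sum_m|\xi_m|e^{p\alpha_m}$ one has $\|\xi\|_p\le\|\xi\|_{k_0}^{1/2}\|\xi\|_q^{1/2}$ with $q=2p-k_0$ by the Cauchy--Schwarz inequality. Since $DN$ is a linear topological invariant, $O(X)$ has $DN$, so by Theorem \ref{thm29} the manifold $X$ is parabolic and $O(X)\cong O(\mathbb{C}^n)$ as Fr\'echet spaces. At the Fr\'echet level this already forces $\alpha_j\asymp j^{1/n}$, because isomorphic infinite type power series spaces have comparable exponent sequences. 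This is not yet enough, since tame isomorphism is strictly finer than isomorphism and, as the diameter computation below shows, requires the exponent sequences to be asymptotically \emph{equal}, not merely comparable.

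The core step is the identification of $\alpha$ through Kolmogorov diameters. For $\Lambda_\infty(\alpha)$ a direct computation gives the exact value $-\log d_j(U_{s+p},U_s)=p\,\alpha_{j+1}$, where $U_s$ is the unit ball of $\|\cdot\|_s$ and $d_j$ the $j$-th Kolmogorov diameter; transporting these through a tame isomorphism, which intertwines the two gradings up to a fixed index shift, shows that $\Lambda_\infty(\alpha)\cong_t\Lambda_\infty(\beta)$ forces $\alpha_j/\beta_j\to 1$. On the analytic side, the diameters of the embeddings $(O(X),\|\cdot\|_{K_{s+p}})\hookrightarrow(O(X),\|\cdot\|_{K_s})$ are governed by the sharp asymptotics for widths of spaces of holomorphic functions on an $n$-dimensional Stein manifold, namely $-\log d_j\sim c\,j^{1/n}$ with a genuine limit, the constant $c$ being expressed through the relative Monge--Amp\`ere capacity (Green function) of the pair $(K_s,K_{s+p})$. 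Combining the two gives $\alpha_j\sim c_*\,j^{1/n}$ with a definite constant $c_*$. Finally I would regrade the exhaustion by the scalar $\lambda=(n!)^{1/n}/c_*$, replacing $K_s$ by $K'_s=K_{\lceil\lambda s\rceil}$; this multiplies the exponent sequence by $\lambda$ up to a bounded shift, so that the ratio of the new exponent sequence to $\alpha_0$ tends to $1$ and $(O(X),\|\cdot\|_{K'_s})\cong_t\Lambda_\infty(\alpha_0)\cong_t(O(\mathbb{C}^n),\|\cdot\|_{P_s})$. The previous theorem then yields that $X$ is $S^*$-parabolic.

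The main obstacle is the sharp diameter asymptotics: what is needed is not the elementary comparability $\alpha_j\asymp j^{1/n}$ that already follows from parabolicity, but the \emph{existence of the limit} of $(-\log d_j)/j^{1/n}$, which lets the exponent sequence be rescaled to match $\alpha_0$ asymptotically. This is precisely the solution of the Kolmogorov problem on the $\varepsilon$-entropy of classes of analytic functions and requires the pluripotential-theoretic machinery of relative extremal functions and capacities; care is also needed to verify that the compact sets furnished by the given exhaustion are regular enough (pluriregular) for these asymptotics to apply, and that the non-integer regrading is absorbed into the tameness shift.
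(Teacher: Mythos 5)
You should know at the outset that this paper contains no proof of this statement: it is quoted from \cite{AS2} in a survey section. The method of the proof there is, however, visible in the Proposition of Section \ref{sec4} of the present paper: one transports the coordinate basis, setting $f_m=T(e_m)$ for a tame isomorphism $T:\Lambda_\infty(\alpha)\to (O(X),\|\cdot\|_{K_s})$, extracts the two-sided tame estimates $e^{(k-A)\alpha_m}\prec\|f_m\|_{K_k}\prec e^{(k+A)\alpha_m}$, and constructs the special exhaustion function \emph{directly} as a regularized upper envelope built from the functions $\alpha_m^{-1}\ln|f_m|$, using the Lelong--Bremermann representation, Hartogs' lemma and upper semicontinuous regularization to verify plurisubharmonicity, exhaustiveness, maximality off a compact set, and continuity. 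That construction works for an \emph{arbitrary} exponent sequence $\alpha$, which is precisely why the theorem can be stated with no growth hypothesis on the power series space. Your route is genuinely different: you reduce to the preceding theorem (tame isomorphism with $(O(\mathbb{C}^n),\|\cdot\|_{P_s})$) by normalizing the exponent sequence.

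Your functional-analytic skeleton checks out: the monomial identification of $(O(\mathbb{C}^n),\|\cdot\|_{P_s})$ with $\Lambda_\infty(\alpha_0)$, $\alpha_{0,j}\sim (n!\,j)^{1/n}$; the fact that a tame isomorphism between $\Lambda_\infty(\alpha)$ and $\Lambda_\infty(\beta)$ forces $\alpha_j/\beta_j\to 1$ (via $d_j(U_{s+p},U_s)=e^{-p\alpha_{j+1}}$ and letting $p\to\infty$); and the scalar regrading $K_s\mapsto K_{\lceil \lambda s\rceil}$, which does rescale the grading tamely, since ratio-$1$ exponent sequences give tamely equivalent spaces via the identity. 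The genuine gap is the pivotal analytic input: the \emph{existence of the limit} of $(-\log d_j)/j^{1/n}$ for the pairs $(K_s,K_{s+p})$. This is exactly the Zaharyuta conjecture in Kolmogorov's $\varepsilon$-entropy problem, and its solution (Zaharyuta's reduction combined with the approximation theorems of Nivoche and Poletsky) is available only for suitably regular pairs in $\mathbb{C}^n$; for an arbitrary exhaustion of an arbitrary $n$-dimensional Stein manifold it is not an off-the-shelf theorem. The pluriregularity concern you flag can be repaired (interpolate smoothly bounded holomorphically convex compacts between $K_s$ and $K_{s+1}$; bounded index shifts are tame), but the Stein-manifold case of the sharp asymptotics remains an unsupported pillar --- and within this circle of ideas it is normally obtained as a \emph{consequence} of $S^{*}$-parabolicity (compare the last theorem of Section \ref{sec1}, where $\lim_m \alpha_m/m^{1/n}$ is computed from $\int_X (dd^c\rho)^n$ only \emph{after} a special exhaustion function exists), so your argument is close to circular. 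Note also that within your strategy the sharp limit is indispensable: only affine regradings $s\mapsto \lambda s+O(1)$ keep the K\"othe matrix $e^{\phi(k)\alpha_m}$ of power-series type, so the comparability $\alpha_j\asymp j^{1/n}$, which you correctly deduce from Theorem \ref{thm29}, cannot be upgraded by regrading alone. The envelope construction of \cite{AS2} bypasses all of this, which is what the absence of any growth hypothesis on $\alpha$ in the statement is signaling.
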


\bigskip

Given a ${S}^*$-parabolic Stein manifold $X,$ $\dim X=n,$ with a special
exhaustion function $\rho ,$ a natural grading for $O\left( X\right) $ can
be obtained by considering the grading induced by the exhaustion $\left( 
\overline{D}_{k} \right) _{k=1}^{\infty }$ where $D_{k}=\left( z:\,\rho
\left( z\right) <k\right),\,\,$ $k=1,2,...,\,\,$are the sub-level sets of $%
\rho.\,\,$ We will conclude this section with a result about the Fr\'{e}chet
space structure of this graded space.

\bigskip

\begin{theorem}
( \cite{AS2}) With the above notation the graded Fr\'{e}chet space $\left(
O\left( X\right) ,{\left\Vert {\cdot}\right\Vert _{\overline{D}_{s}}}\right) 
$ is tamely isomorphic to an infinite type power series space $\Lambda
_{\infty }\left( \alpha \right) $ where the sequence $\alpha =\left( \alpha
_{n}\right) _{n}$ satisfies 
\begin{equation*}
\lim_{m}\frac{\alpha _{m}}{m^{\frac{1}{n}}}=2\pi \left( n!\right) ^{\frac{1}{%
n}}\left( \int\limits_{X}\left( dd^{c}\rho \right) ^{n}\right) ^{-\frac{1}{n}%
}.
\end{equation*}
\end{theorem}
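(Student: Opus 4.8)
The plan is to reduce the identity for $\alpha$ to an asymptotic computation of Kolmogorov diameters of the canonical linking maps, and then to evaluate those diameters by pluripotential theory. By the previous theorem the graded space $\left(O(X),\{\|\cdot\|_{\overline{D}_s}\}\right)$ is tamely isomorphic to some infinite type power series space $\Lambda_\infty(\alpha)$, so what remains is to pin down the growth of $\alpha$. For $\Lambda_\infty(\alpha)$ with its canonical grading the $j$-th Kolmogorov diameter of the unit ball of $|\cdot|_{s+t}$ measured in $|\cdot|_s$ is exactly $e^{-t\alpha_{j+1}}$. A tame isomorphism relates the diameters of the linking maps of the two gradings up to bounded shifts of the indices $s,t$ and up to multiplicative constants; since the diameter asymptotics found below will be asymptotically linear in $t$, these bounded shifts do not affect the leading $j^{1/n}$ coefficient. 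Thus it suffices to compute, for the concrete restriction map $R_{s,t}\colon O(\overline{D}_{s+t})\to O(\overline{D}_s)$ with $j$-th Kolmogorov diameter $d_j$, the limit $\lim_j (-\log d_j)/j^{1/n}$ and to match it against $t\,\lim_m \alpha_m/m^{1/n}$.

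The asymptotic distribution of these diameters is governed by a theorem of Zaharyuta--Nivoche type: for a regular pair $\overline{D}_s\subset D_{s+t}$ one has
\[ \lim_{j\to\infty}\frac{-\log d_j}{j^{1/n}}=2\pi\left(\frac{n!}{C(\overline{D}_s,D_{s+t})}\right)^{1/n}, \]
where $C(\overline{D}_s,D_{s+t})=\int_{D_{s+t}}\big(dd^c u^*\big)^n$ is the relative (Monge--Amp\`ere) capacity of the pair and $u^*=u^*_{\overline{D}_s,D_{s+t}}$ is its relative extremal function. In the present $S^*$-parabolic setting this extremal function is explicit, which is what makes the constant computable. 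Indeed, choosing $s$ large enough that the compact set $K$ off which $\rho$ is maximal satisfies $K\subset\subset D_s$, the function $(\rho-(s+t))/t$ is plurisubharmonic on $D_{s+t}$, vanishes on $\partial D_{s+t}$, equals $-1$ on $\partial D_s$, and is maximal on the annulus $D_{s+t}\setminus\overline{D}_s$ because $\rho$ is maximal off $K$; hence
\[ u^*_{\overline{D}_s,D_{s+t}}=\max\left\{-1,\ \frac{\rho-(s+t)}{t}\right\}. \]

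The key pluripotential step is the evaluation of the capacity. Write $w:=(\rho-(s+t))/t$. Then $u^*$ and $w$ coincide on a neighborhood of $\partial D_{s+t}$ (there $w>-1$) and both are bounded plurisubharmonic on $D_{s+t}$, so their total Monge--Amp\`ere masses agree; since $dd^c w=t^{-1}dd^c\rho$ and $(dd^c\rho)^n$ is supported on $K\subset\subset D_s\subset D_{s+t}$,
\[ C(\overline{D}_s,D_{s+t})=\int_{D_{s+t}}\big(dd^c u^*\big)^n=\int_{D_{s+t}}\big(dd^c w\big)^n=\frac{1}{t^n}\int_X(dd^c\rho)^n. \]
Heuristically, the mass $t^{-n}\int_{D_s}(dd^c\rho)^n$ removed from the interior by clamping $w$ to $-1$ on $\overline{D}_s$ is swept onto $\partial D_s$, leaving the total mass unchanged.

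Substituting this capacity into the Zaharyuta--Nivoche asymptotic and equating with $t\,\lim_m\alpha_m/m^{1/n}$ gives
\[ t\,\lim_{m}\frac{\alpha_m}{m^{1/n}}=2\pi\left(\frac{n!\,t^n}{\int_X(dd^c\rho)^n}\right)^{1/n},\qquad\text{hence}\qquad \lim_{m}\frac{\alpha_m}{m^{1/n}}=2\pi(n!)^{1/n}\left(\int_X(dd^c\rho)^n\right)^{-1/n}, \]
the factors of $t$ cancelling as they must. As a check, on $(\mathbb{C}^n,\log|z|)$ the exponents $\alpha_m$ are the non-decreasing rearrangement of the monomial degrees, $\#\{m:\alpha_m\le d\}=\binom{n+d}{n}\sim d^n/n!$, giving $\lim_m\alpha_m/m^{1/n}=(n!)^{1/n}$, which agrees with the formula under the normalization $\int_{\mathbb{C}^n}(dd^c\log^+|z|)^n=(2\pi)^n$. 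The main obstacle is the second paragraph: the Zaharyuta--Nivoche diameter asymptotic is a deep result of pluripotential theory, and even with the explicit extremal function in hand the quantitative matching of the leading diameter constant with the relative capacity (rather than merely the rate $j^{1/n}$) requires two-sided estimates, the lower bound on the number of large diameters coming from an $L^2$/H\"ormander construction of enough independent holomorphic functions with the weight determined by $u^*$, and the upper bound from the concentration of Monge--Amp\`ere mass forced by the maximality of $\rho$.
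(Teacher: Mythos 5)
Your proposal follows essentially the same route as the proof in the cited source \cite{AS2} (the present paper states this theorem in survey style, without reproducing the proof): transfer of Kolmogorov diameters through the tame isomorphism to the power-series model, the Zaharyuta--Nivoche width asymptotics $\lim_j(-\log d_j)/j^{1/n}=2\pi\left(n!/C(\overline{D}_s,D_{s+t})\right)^{1/n}$, the explicit relative extremal function $\max\left\{-1,(\rho-(s+t))/t\right\}$ available because $\rho$ is continuous and maximal off $K\subset\subset D_s$, and the total Monge--Amp\`ere mass identity $C(\overline{D}_s,D_{s+t})=t^{-n}\int_X(dd^c\rho)^n$, followed by letting $t\to\infty$ to absorb the tame index shifts. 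The one point worth flagging is that the width asymptotics is established in the literature for regular pairs in $\mathbb{C}^n$, so its application to the pair $(\overline{D}_s,D_{s+t})$ inside the Stein manifold $X$ requires the justification given in \cite{AS2}; with that reference supplied, your computation (including the normalization check on $(\mathbb{C}^n,\log|z|)$) correctly pins down $\lim_m\alpha_m/m^{1/n}$.
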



\section{Aspects of pluripotential theory on ${S}$-parabolic manifolds \label%
{sec2}}

The complex space $\mathbb{C}^n$ with the special exhaustion function $\log
|z|$ is a classical and inspiring example of a parabolic manifold. One can
introduce a pluripotential theory on a $S$-parabolic manifold $(X,\rho)$ by
taking the well-studied complex pluripotential theory on $\mathbb{C}^n$ as a
model and by using $\rho$ instead of $\log |z|$. On $S^*$-parabolic
manifolds, analogs of basic notions of classical pluripotential theory were
introduced by Zeriahi \cite{Ze1} (see also \cite{AS2}). In this section we
introduce the analog of classical Lelong classes for parabolic manifolds
with not-necessarily continuous special exhaustion functions i.e. for ${S}$%
-parabolic manifolds and consider certain plurisubharmonic functions
belonging to this class.

\begin{definition}
Let $(X,\rho)$ be a ${S}$-parabolic manifold. The class 
\begin{equation*}
\mathcal{L}_{\rho}=\{u(z)\in PSH(X): \,\, u(z)\leq \mathrm{c_u}+{\rho}^+(z)
\,\forall z \in X\},
\end{equation*}
where $c_u$ is a constant, ${\rho}^+(z)=\max\{ 0, p(z)\}$, will be called
the \emph{Lelong class} corresponding to the special exhaustion function ${%
\rho}$. By $\mathcal{L}_{\rho}(K),$ $K\subset X$ a compact set, we denote
the class 
\begin{equation*}
\mathcal{L}_{\rho}(K)=\{u\in \mathcal{L}_{\rho}: \,\,u|_K\leq 0 \}.
\end{equation*}
The analog of Zaharyuta-Siciak etremal function for this class i.e. the
upper regularization ${V}^*(z,K)=\lim V(z,K)$ of ${V}(z,K)=\sup \{u(z)\in%
\mathcal{L}(K)\}$ will be called the ${\rho}$-Green function of $K$.
\end{definition}

Note that ${V}^*(z,K)$ could either be identically $+\infty$ (if $K$ is
pluripolar) or it belongs to $\mathcal{L}_p$ and defines a special
exhaustion function for $X$ (if $K$ is not pluripolar).

\emph{Pluriregular points}, for a compact $K\subset X$, can be defined, in
accordance with the classical case, as the points $z_0\in X$ for which ${V}%
^*(z,K)=0$. A compact set $K\subset X$ will be called \emph{pluriregular} in
case all of its points are pluriregular i.e. ${V}^*(z,K)=0\,\forall z\in K$.
It is not difficult to show, arguing as in the classical case, that the
closure $\overline{D}$ of a domain $D\subset X$ with the piecewise smooth
boundary, $\partial D \in C^1$, is pluri-regular. Consequently there is a
rich supply of pluri-regular compact set for a given ${S}$-parabolic
manifold.

On the space $\mathbb{C}^n$ it is a classical fact due to Zaharyuta that for
a compact pluriregular set, ${V}(z ,K)$ is a continuous function (see \cite%
{K}). Zeriahi observed that the same result is valid for ${S}^*$-parabolic
manifolds \cite{Ze1}. On the other hand for a ${S}$-parabolic manifold $X$
if ${V}^*(z,K)\in C(X)$ for a compact $K\subset\subset X$, then $X$ becomes
a ${S}^*$-parabolic manifold. In fact in this case one can take ${V}^*(z,K)$
as a special exhaustion function for $X$.

Our next theorem gives a criterion for checking continuity of ${V}^*(z,K)$
for pluriregular compact subsets of a ${S}$-parabolic manifold $X$.

\begin{theorem}
\label{thm4} (see \cite{AS1}). Let $(X,\rho)$ be a ${S}^*$ -parabolic
manifold with special exhaustion function $\rho(z)\in psh(X)$ and let $%
\rho_*(z)=\varliminf_{\,w\to z}\rho(z)$ be the measure of discontinuity of $%
\rho$ at the point $z\in X$. If 
\begin{equation}  \label{eqn2}
\varlimsup_{\rho(z)\to \infty}\frac{\rho(z)}{\rho_*(z)}=\lim_{\rho(z)\to
\infty}\frac{\rho(z)}{\rho_*(z)}=1
\end{equation}
then $V^*(z,K)\in C(X)$ for any pluriregular compact $K\subset X$.
\end{theorem}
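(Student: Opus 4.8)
The plan is to deduce continuity of $V^*(\cdot,K)$ from the already available continuous case (the Zaharyuta--Zeriahi theorem for $S^*$-parabolic manifolds) by replacing the discontinuous exhaustion $\rho$ with a comparable \emph{continuous} special exhaustion. Since for non-pluripolar $K$ the function $V^*(\cdot,K)$ lies in $\mathcal{L}_\rho$ and is therefore plurisubharmonic, hence upper semicontinuous, the entire problem reduces to lower semicontinuity; I will obtain this by squeezing $V^*(\cdot,K)$ between extremal functions attached to two exhaustions whose Lelong classes have the same extremal function.

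Concretely, the first step is to produce a continuous plurisubharmonic special exhaustion $\sigma$ on $X$ with $\sigma\le\rho$ everywhere and $\rho/\sigma\to1$ as $\rho\to\infty$. Granting such a $\sigma$, the comparison is purely formal. From $\sigma\le\rho$ one gets $\sigma^+\le\rho^+$, hence $\mathcal{L}_\sigma(K)\subseteq\mathcal{L}_\rho(K)$ and $V_\sigma(\cdot,K)\le V_\rho(\cdot,K)$. In the other direction, fixing $\varepsilon>0$, the relation $\rho/\sigma\to1$ gives a compact $L_\varepsilon$ with $\rho\le(1+\varepsilon)\sigma$ off $L_\varepsilon$; absorbing the bounded discrepancy on $L_\varepsilon$ into the additive constant shows $\mathcal{L}_\rho(K)\subseteq\mathcal{L}_{(1+\varepsilon)\sigma}(K)$, and since $u\mapsto u/(1+\varepsilon)$ identifies $\mathcal{L}_{(1+\varepsilon)\sigma}(K)$ with $\mathcal{L}_\sigma(K)$ we obtain $V_\rho(\cdot,K)\le(1+\varepsilon)V_\sigma(\cdot,K)$. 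Thus
\[
V_\sigma(\cdot,K)\ \le\ V_\rho(\cdot,K)\ \le\ (1+\varepsilon)\,V_\sigma(\cdot,K)\qquad(\varepsilon>0),
\]
and because $V_\sigma(\cdot,K)\ge0$ (the constant $0$ is admissible) letting $\varepsilon\downarrow0$ forces $V_\rho(\cdot,K)=V_\sigma(\cdot,K)$, whence $V^*_\rho(\cdot,K)=V^*_\sigma(\cdot,K)$. Moreover $0\le V_\sigma\le V_\rho\le V^*_\rho$ together with $V^*_\rho(\cdot,K)|_K=0$ gives $V^*_\sigma(\cdot,K)|_K=0$, so $K$ is pluriregular for $\sigma$ as well. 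As $(X,\sigma)$ is now $S^*$-parabolic, Zeriahi's continuity theorem applies and yields $V^*_\sigma(\cdot,K)\in C(X)$; therefore $V^*_\rho(\cdot,K)\in C(X)$.

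Everything then hinges on the construction of $\sigma$, and this is where hypothesis (\ref{eqn2}) must be used; I expect it to be the main obstacle. The idea is to regularize $\rho$ while keeping it maximal outside a compact set. Near the compact $K_0$ on which $\rho$ fails to be maximal the discontinuities of $\rho$ are harmless, since there $\rho$ is bounded and the Lelong-class constraint $u\le c+\rho^+$ only fixes growth at infinity up to an additive constant; so one may take $\sigma$ continuous and $\le\rho$ there with a merely bounded loss. The delicate region is the maximal part $\{\rho>t\}$ for large $t$: there I would solve the homogeneous Monge--Amp\`ere (Perron) problem for continuous plurisubharmonic minorants of $\rho$ with the growth dictated by $\rho$ at infinity, the Bedford--Taylor theory guaranteeing that continuous data produce a continuous maximal solution. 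The role of (\ref{eqn2}) is precisely that, although $\rho$ jumps, along any approach to a point the values dip only to $\rho_*$, and $\rho_*/\rho\to1$; hence the growth prescribed for $\sigma$ can be realized by a \emph{continuous} minorant satisfying $\sigma\le\rho$ and $\rho/\sigma\to1$, exactly the comparability required above.

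I expect the technical heart to be this passage from the controlled oscillation $\rho-\rho_*=o(\rho)$ to an honest continuous maximal minorant $\sigma$: in effect one is upgrading $S$-parabolicity to $S^*$-parabolicity under the extra assumption (\ref{eqn2}). Without such a bound the jumps of $\rho$ could be comparable to $\rho$ itself, and then no continuous special exhaustion comparable to $\rho$ need exist; this is why the unconditional equivalence of $S$- and $S^*$-parabolicity (Problem \ref{problem1}) remains open, and it is (\ref{eqn2}) that circumvents it here.
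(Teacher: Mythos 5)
Your reduction step (the formal comparison $V_\sigma\le V_\rho\le(1+\varepsilon)V_\sigma$, hence $V^*_\rho=V^*_\sigma$, followed by Zeriahi's continuity theorem) is sound as far as it goes, but the proposal has a genuine gap exactly where you suspect it: the continuous special exhaustion $\sigma\le\rho$ with $\rho/\sigma\to1$ is never constructed, and the route you sketch for it does not work. The Perron/Bedford--Taylor continuity theory you invoke concerns the Dirichlet problem on bounded strictly pseudoconvex domains with continuous boundary data; here the obstacle $\rho$ is discontinuous and the region is unbounded, and the upper semicontinuous regularization of the envelope of continuous psh minorants of $\rho$ may jump above $\rho$ precisely at the discontinuity points of $\rho$ --- controlling this requires an argument of the same nature and difficulty as the theorem itself. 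Indeed the logic is essentially circular: in the paper the only continuous special exhaustion comparable to $\rho$ that one obtains under hypothesis \eqref{eqn2} is $V^*(\cdot,K)$ itself, and the remark following the theorem in the paper states that the upgrade from $S$-parabolicity to $S^*$-parabolicity is a \emph{consequence} of the theorem, not a lemma available beforehand. So your first step presupposes what is, in substance, the conclusion.

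The paper avoids this by proving continuity of $\nu=V^*(\cdot,K)$ directly, with no intermediate $\sigma$. The key observations are: (i) since $C_1+\rho^+\le\nu\le C_2+\rho^+$, the hypothesis \eqref{eqn2} transfers from $\rho$ to $\nu$ itself; (ii) by the Fornaess--Narasimhan approximation theorem on Stein manifolds one takes smooth $\nu_j\in psh(X)\cap C^\infty(X)$ with $\nu_j\downarrow\nu$; (iii) pluriregularity of $K$ plus Hartogs' lemma gives $\nu_j<\varepsilon$ on $K$ for large $j$, and \eqref{eqn2} applied to $\nu$ gives $\nu<(1+\varepsilon)\nu_*\le(1+\varepsilon)R$ on $\partial B_R=\partial\{\nu<R\}$, hence $\nu_j<(1+2\varepsilon)R$ there by Hartogs again; (iv) one then glues
\begin{equation*}
w(z)=\left\{
\begin{array}{lc}
\max\{\nu_j(z),\,(1+3\varepsilon)\nu(z)\} & z\in B_R,\\
(1+3\varepsilon)\nu(z)-\varepsilon R & z\notin B_R,
\end{array}
\right.
\end{equation*}
which is globally psh because $(1+3\varepsilon)\nu-\varepsilon R\ge(1+2\varepsilon)R\ge\nu_j$ on $\partial B_R$, and $\frac{1}{1+3\varepsilon}(w-\varepsilon)\in\mathcal{L}_\rho(K)$ yields $\nu_j\le(1+3\varepsilon)V^*(\cdot,K)+\varepsilon$ on $B_R$. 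Together with $\nu_j\ge\nu$ this is a Dini-type squeeze forcing the decreasing convergence $\nu_j\to\nu$ to be locally uniform, so $\nu$ is continuous. Note that this is where \eqref{eqn2} really acts --- as a quantitative bound on the oscillation of $\nu$ along the level set $\partial B_R$ permitting the gluing --- rather than as an input to an envelope construction; if you want to salvage your architecture, you would have to extract a continuity statement of type (iv) first, at which point the detour through $\sigma$ is unnecessary.
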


We note, that the condition \eqref{eqn2} means continuity of $\rho(z)$ at
infinitive points of $X$.

\begin{proof}
We fix a pluriregular compact $K\subset X$ and take the Green function $%
V^*(z,K)$. It is clear, that there exist a constants $C_1,C_2$: 
\begin{equation*}
C_1+\rho^+(z)\leq V^*(z,K)\leq C_2+\rho^+(z)\, \,\forall z\in X.
\end{equation*}
It follows, that the Green function $\nu(z)=V^*(z,K)$ also satisfies the
condition \eqref{eqn2}.

By the approximation theorem (see \cite{FN}, \cite{S4}) we can approximate $%
V^*(z,K)\in psh(X)$: we can find a sequence of smooth $psh$ functions 
\begin{equation*}
\nu_j(z)\in psh(X)\cap C^\infty(X),\, \nu_j(z)\downarrow \nu(z),\,z\in X.
\end{equation*}

Since $K\subset X$ is pluriregular, then $\nu|_K\equiv 0$ and for fixed $%
\varepsilon >0$ we take the neighborhood $U=\lbrace
\nu(z)<\varepsilon/2\rbrace\supset K$. Applying for $K\subset U$ the
well-known Hartog's lemma to $\nu_j(z)\downarrow \nu(z)$, we have: 
\begin{equation*}
\nu_j(z)<\varepsilon, \, \forall j\geq j_0, \, z\in K.
\end{equation*}

By \eqref{eqn2} there exists $R>0$ such that 
\begin{equation}  \label{eqn3}
\nu(z)<\nu_*(z)+\varepsilon \nu_*(z),\, z \notin B_R=\lbrace z\in X:
\nu(z)<R\rbrace ,\, B_R\supset K.
\end{equation}

If $z\in\partial B_R$, then by \eqref{eqn3}, $\nu(z)<(1+\varepsilon)
\nu_*(z)\leq (1+\varepsilon)R$. Applying again the Hartog's lemma we have 
\begin{equation*}
\nu_j(z)<(1+2\varepsilon)R,\, j>j_1\geq j_0,\, z\in \partial B_R.
\end{equation*}

Fix $j>j_1$ and put 
\begin{equation*}
w(z) = \left\{ 
\begin{array}{lc}
\max\lbrace \nu_j(z), (1+3\varepsilon)\nu(z)\rbrace & \text{if } z\in B_R,
\\ 
(1+3\varepsilon)\nu(z)-\varepsilon R & \text{if } z\notin B_R.%
\end{array}
\right.
\end{equation*}
Since for $z\in\partial B_R$ we have $w(z)=(1+3\varepsilon)\nu(z)-%
\varepsilon R\geq (1+3\varepsilon) R-\varepsilon R=(1+2\varepsilon) R\geq
\nu_j(z)$, then $w(z)\in psh(X)$. Hence, the function 
\begin{equation*}
\frac{1}{1+3\varepsilon}(w(z)-\varepsilon)\in \mathcal{L}.
\end{equation*}

Since for $z\in K$ this function is negative, then 
\begin{equation*}
\frac{1}{1+3\varepsilon}(w(z)-\varepsilon)\leq V^*(z,K).
\end{equation*}

It follows, that $\nu_j(z)\leq (1+3\varepsilon)V^*(z,K)+\varepsilon,\, z\in
B_R$. This with\newline
$\nu_j(z)\geq V^*(z,K)$ gives continuity of $V^*(z,K)$ in $B_R$ and
consequently on $X$.
\end{proof}

Note that Theorem \ref{thm4} follows, that in the condition \eqref{eqn2} $X$
is ${S}^*$-parabolic. On the other hand if $X$ is ${S}^*$-parabolic, i.e. $%
\rho$ is continuous, then $\rho(z)\equiv\rho_*(z)$, so the condition %
\eqref{eqn2} is satisfied automatically.

We will now introduce the main objects of our study, namely the polynomials
on ${S}$-parabolic manifolds.

\begin{definition}
Let $(X,\rho)$ be a ${S}$-parabolic manifold. A holomorphic function $f\in {O%
} (X)$ is called a \emph{polynomial} on $X$ in case for some integers $d$
and $c>0$ $f$ satisfies the growth estimate 
\begin{equation*}
\ln |f(z)|\leq d\cdot \rho^+(z)+c\quad \forall z\in X.
\end{equation*}
The minimal such $d$ will be called the \emph{degree} of $f$ and the set of
all polynomials on $X$ with degree less than or equal to $d$ will be denoted
by $\mathcal{P}_\rho^d.$

A. Zeriahi, using an idea of Plesniak \cite{P1} showed that the vector
spaces $\mathcal{P}_p^d$, for an ${S}$-parabolic manifold is finite
dimensional, and give bounds for their dimension \cite{Ze1}. We will give a
different proof of this result using techniques of \cite{AS1}.

\begin{theorem}
Let $(X,\rho)$ be an ${S}$-parabolic Stein manifold. The space $\mathcal{P}%
_\rho^d$ is a finite dimensional complex vector space and there exists a $%
C=C(X)>0$ such that $\dim \mathcal{P}_p^d \leq Cd$.
\end{theorem}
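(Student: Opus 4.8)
The plan is to derive a Bernstein--Walsh inequality from the $\rho$-Green function introduced in Section~\ref{sec2} and then to read off the dimension from a local Schwarz lemma in a coordinate chart. First I would fix a point $p\in X$ and a holomorphic chart about $p$ in which two concentric closed polydiscs $\overline{\Delta}_r\subset\subset\overline{\Delta}_R$ are contained in $X$. A closed polydisc is non-pluripolar, so by the remarks preceding the statement $V^*(\,\cdot\,,\overline{\Delta}_r)\in\mathcal{L}_\rho$; in particular there is a constant $C_2$, depending only on the chart, with $V^*(z,\overline{\Delta}_r)\le C_2+\rho^+(z)$ for all $z\in X$. For $f\in\mathcal{P}_\rho^d$ the function $u=\tfrac1d\ln|f|$ is plurisubharmonic and satisfies $u\le \tfrac{c}{d}+\rho^+$ by the defining growth estimate, hence $u\in\mathcal{L}_\rho$; after normalising $\|f\|_{\overline{\Delta}_r}=1$ we also have $u\le 0$ on $\overline{\Delta}_r$, so $u\in\mathcal{L}_\rho(\overline{\Delta}_r)$ and therefore $u\le V^*(\,\cdot\,,\overline{\Delta}_r)$. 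This is the Bernstein--Walsh estimate
\[
|f(z)|\le \|f\|_{\overline{\Delta}_r}\,\exp\bigl(d\,V^*(z,\overline{\Delta}_r)\bigr),\qquad z\in X,
\]
and, since $V^*(\,\cdot\,,\overline{\Delta}_r)\le C_2+\rho^+$ is bounded on the compact set $\overline{\Delta}_R$ by a constant $E$ that depends only on the chart, it yields the two--polydisc comparison $\|f\|_{\overline{\Delta}_R}\le M^{d}\|f\|_{\overline{\Delta}_r}$ with $M=e^{E}$, uniformly over $f\in\mathcal{P}_\rho^d$.

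Next I would bound how strongly an element of $\mathcal{P}_\rho^d$ can vanish at $p$. Restricting $f$ to the complex lines through the centre of the chart, the one--variable Schwarz lemma gives $\|f\|_{\overline{\Delta}_r}\le (r/R)^{m}\,\|f\|_{\overline{\Delta}_R}$ whenever $f$ vanishes at $p$ to order at least $m$. Combined with the comparison above, this forces any nonzero $f\in\mathcal{P}_\rho^d$ to vanish at $p$ to order at most $c_0 d$, where $c_0=E/\ln(R/r)$ is a fixed constant. Consequently the linear map sending $f\in\mathcal{P}_\rho^d$ to its jet of Taylor coefficients at $p$ of total order at most $\lfloor c_0 d\rfloor$ is injective, and therefore
\[
\dim\mathcal{P}_\rho^d\le \#\{\alpha\in\mathbb{N}^{n}:|\alpha|\le\lfloor c_0 d\rfloor\}=\binom{n+\lfloor c_0 d\rfloor}{n}\le C\,d^{\,n},
\]
with $C=C(X)>0$; for $n=\dim X=1$ this is the asserted bound, and for general $n$ the natural estimate is of order $d^{\,n}$, in agreement with the model $\mathbb{C}^n$.

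The design choice that makes the argument clean is to take the pluriregular set in the Bernstein--Walsh inequality to be the \emph{same} coordinate polydisc used to measure vanishing at $p$, so that the upper (growth) control and the lower (non-vanishing) control are read off on concentric, comparable sets; the only book-keeping is to check that $C_2$, $E$, and hence $M$ and $c_0$, are independent of $f$ and of $d$, and to note that nothing above uses continuity of $\rho$, so the proof applies to every $S$-parabolic manifold. The one genuinely substantive input, the membership $V^*(\,\cdot\,,\overline{\Delta}_r)\in\mathcal{L}_\rho$ and the resulting Bernstein--Walsh inequality, is supplied by Section~\ref{sec2}, so I expect no serious obstacle. In the $S^*$ case one can alternatively identify the sharp leading constant: the tame isomorphism $(O(X),\|\cdot\|_{\overline{D}_s})\cong\Lambda_\infty(\alpha)$ of Section~\ref{sec1} carries $\mathcal{P}_\rho^d$ into $\{\xi:\xi_m=0\text{ whenever }\alpha_m>d\}$, whose dimension is $\#\{m:\alpha_m\le d\}$, and the asymptotics $\alpha_m\sim 2\pi(n!)^{1/n}\bigl(\int_X(dd^c\rho)^n\bigr)^{-1/n}m^{1/n}$ then give $\dim\mathcal{P}_\rho^d\lesssim \bigl(2\pi\bigr)^{-n}(n!)^{-1}\bigl(\int_X(dd^c\rho)^n\bigr)\,d^{\,n}$.
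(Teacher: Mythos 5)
Your argument is correct, and its second half takes a genuinely different route from the paper's. Both proofs begin with the same Bernstein--Walsh step: for $f\in\mathcal{P}_\rho^d$ normalized on a fixed non-pluripolar compact set $K$, the function $\frac1d\ln|f|$ lies in $\mathcal{L}_\rho(K)$, whence $|f(z)|\le\|f\|_K\exp\bigl(d\,V^*(z,K)\bigr)$ and the two-set comparison $\|f\|_{\overline{D}}\le e^{dR}\|f\|_K$. From there the paper goes through functional analysis: the theorem of Tichomirov converts this comparison into a lower bound on the $(\delta(d)-1)$-th Kolmogorov diameter in $C(K)$ of the restricted unit ball of $O(X)|_{\overline{D}}$, and the result of \cite{A5} giving the weak asymptotics $e^{-m^{1/n}}$ for these diameters then forces $\delta(d)\le C_2 d^n$. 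You instead stay elementary and local: taking $K$ to be a coordinate polydisc, the one-variable Schwarz lemma along lines through the centre caps the vanishing order of any nonzero $f\in\mathcal{P}_\rho^d$ at $c_0d$, so the jet map at the centre is injective and $\dim\mathcal{P}_\rho^d\le\binom{n+\lfloor c_0d\rfloor}{n}\le Cd^n$. This is essentially the Plesniak--Zeriahi route that the paper explicitly set out to replace with "techniques of \cite{AS1}"; it buys self-containedness (no Kolmogorov-diameter machinery, no appeal to \cite{A5}) and visibly works for merely $S$-parabolic $\rho$, while the paper's route ties the count to the diametral invariants that organize the rest of the paper. Both arguments yield $O(d^n)$, so both indicate that the bound $Cd$ in the statement should read $Cd^n$, as your remark about the case $n=1$ correctly observes; note also that the step "vanishing to order greater than $c_0d$ forces $f\equiv 0$" uses connectedness of $X$, which is implicit throughout the paper.

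One caveat on your closing aside: the claim that the tame isomorphism $\left(O(X),\|{\cdot}\|_{\overline{D}_s}\right)\cong\Lambda_\infty(\alpha)$ carries $\mathcal{P}_\rho^d$ into the coordinate subspace $\{\xi:\xi_m=0\text{ whenever }\alpha_m>d\}$ is unjustified and false in general; the paper itself points out that tame isomorphisms need not map polynomials to polynomials (multiplication by $e^z$ on $O(\mathbb{C})$). Extracting the sharp leading constant requires a diameter-type argument of the paper's kind, not naive transport of the degree filtration. Since this aside is not used in your main argument, the proof itself stands.
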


\begin{proof}[sketch of the proof]
Let us choose $\delta(d)$ linearly independent elements from $\mathcal{P}%
_\rho^d$. Fix a  pluriregular compact set $K$ and any domain $D$ with $%
K\subset D\subset\subset X$. We choose an $R_D\in\mathbb{N}$ such that $%
\overline{D}\subset \{z: {V}^*(z, K) < R_D\}$.

Any polynomial $p$ of degree less than or equal to $d$, satisfies 
\begin{equation*}
\frac{1}{d}\ln \left(\frac{|p(z)|}{||p||_K}\right)\leq {V}^*(z,K) \quad
\forall z\in X.
\end{equation*}
The norm's we will use in this proof are the sup norms.

In particular we have 
\begin{equation*}
||p||_{\overline{D}}\leq e^{d\cdot R_D}||p||_K \quad \forall p\in \mathcal{P}%
_p^d.
\end{equation*}
At this point we will put to use two results from functional analysis: the
first is the well-known theorem of Tichomirov which in our setting says that
the above estimate yields an estimate from below of the $\delta (d)-1$'th
Kolmogorov diameter in $C(K)$ of the restriction of the unit ball $O(X)|_{%
\overline{D}}\subset C(\overline{D})$ to $K$ and a general fact from \cite%
{A5} that says it is possible to choose a $D$ for this $K$ such that the
sequence of Kolmogorov diameters considered above is weakly asymptotic $%
\{e^{-m^{1/n}}\}$. We refer the reader to \cite{AKT1} for details. By
choosing $D$ suitable, one gets 
\begin{equation*}
\exists C_1 >0: \,\, e^{-(\delta (d)-1)^{1/n}} \geq C_1e^{d\cdot R_D}.
\end{equation*}
Hence 
\begin{equation*}
\exists C_2 >0:\,\, \delta (d)\leq C_2d^n \text{ for all }d=1,2,...\,.
\end{equation*}
\end{proof}

In the case of algebraic affine manifolds of dimension $n$ with canonical
special exhaustion function, we actually have that the sequence $\{\dim%
\mathcal{P}_\rho^d\}_d$ and $\{d^n\}_d$ are weakly asymptotic i.e. $\exists
C_1>0$ and $C_2>0$ such that 
\begin{equation*}
C_1 \leq {\underline{\lim}}\,_{d \to \infty } \,{\frac{\dim\mathcal{P}_p^d}{%
d^{1/n}}} \leq \overline{\lim}_{d \to \infty}\, {\ \frac{\dim \mathcal{P}_
\rho^d}{d^{1/n}}} \leq C_2.
\end{equation*}
For more information on these matters we refer to the reader to \cite{Ze3}
and \cite{AS1}.
\end{definition}


\section{Example}

\label{sec3} In this section we will construct a parabolic manifold for
which there are no non-trivial polynomials. In the first part of the section
we will first construct a compact polar set $K\subset\mathbb{C}$ and a
subharmonic function $u(z)$ on the complex plane $\mathbb{C}$, harmonic in $%
\mathbb{C} \setminus K$, for which $u|_K=-\infty$ and 
\begin{equation*}
\lim_{z\rightarrow K}\frac{u(z)}{\ln dist(z,K)}=0. 
\end{equation*}
The condition above means, in particular, that near $K$, the function $|u(z)|
$ is smaller than $\varepsilon |\ln dist(z,K)|$. We note that for compact
sets containing an isolated point, such that function does not exists.

In the second part of the section we will use this example to construct

\begin{theorem}
There exists a polar compact $K\subset\mathbb{C}$ and a subharmonic function 
$u(z)$ on the complex plane $\mathbb{C}$, harmonic in $\mathbb{C} \setminus K
$, for which $u|_K=-\infty$, and 
\begin{equation}
\lim_{z\rightarrow K}\frac{u(z)}{\ln dist(z,K)}=0.  \label{eq:4}
\end{equation}
\end{theorem}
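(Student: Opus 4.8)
Construct a compact polar set $K\subset\mathbb{C}$ and a subharmonic $u$ on $\mathbb{C}$, harmonic off $K$, with $u|_K=-\infty$ and $u(z)/\ln\operatorname{dist}(z,K)\to 0$ as $z\to K$.

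**Strategy.** I want $u$ to blow down to $-\infty$ on $K$ but *very slowly* relative to the distance to $K$. The natural building blocks are logarithmic potentials. If $\mu$ is a finite positive measure supported on a polar compact set $K$, then $u(z)=\int\ln|z-w|\,d\mu(w)$ is subharmonic on $\mathbb{C}$, harmonic on $\mathbb{C}\setminus K$, and equals $-\infty$ on the (polar, hence $\mu$-everywhere) set where the energy diverges. So the whole problem reduces to a *rate* question: choose $K$ and $\mu$ so that the potential decays to $-\infty$ asymptotically slower than $\ln\operatorname{dist}(z,K)$.

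Let me sketch the construction plan.

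\begin{enumerate}
\item \textbf{Choice of $K$.} I would take $K$ to be a Cantor-type set on $[0,1]$ (or a sequence of shrinking intervals/dyadic generations) whose gaps are chosen to control the geometry: at generation $n$ one has $2^n$ intervals of length $\ell_n$, separated by gaps $g_n$, with $\ell_n\to 0$ very fast. Polarity (zero logarithmic capacity) is arranged by forcing $\ell_n$ to decrease super-geometrically, e.g. $\ell_n=\exp(-\varphi(n))$ with $\varphi(n)/2^n\to\infty$; then the capacity of each generation tends to $0$ and $\operatorname{cap}(K)=0$.

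\item \textbf{Choice of $\mu$.} Put the equilibrium-type measure of $K$ (or a suitably normalized uniform measure on the generations), so that $u(z)=\int\ln|z-w|\,d\mu(w)$ is the associated potential. On $K$ the energy $\int\!\int\ln|z-w|\,d\mu(z)d\mu(w)=-\infty$ since $\operatorname{cap}(K)=0$, giving $u|_K=-\infty$.

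\item \textbf{The rate estimate.} This is the crux. For $z$ at distance $\delta=\operatorname{dist}(z,K)$ from $K$, I split the potential into the contribution of the mass within distance $\sim\delta$ of $z$ (the ``near'' part, which produces the singularity) and the ``far'' part (bounded). The near part is roughly $\mu(B(z,\delta))\cdot\ln\delta$ plus corrections. The key is that when $K$ is very thin, the local mass $\mu(B(z,\delta))$ that a point $z$ ``sees'' as it approaches $K$ tends to $0$; by tuning the gap/length sequence so that $\mu(B(z,\delta))=o(1)$ uniformly, one forces $u(z)/\ln\delta\to 0$.
\end{enumerate}

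\textbf{The main obstacle} is Step 3: getting the ratio $u(z)/\ln\operatorname{dist}(z,K)\to 0$ \emph{uniformly} as $z\to K$ from all directions and all approach points, not just along a favorable sequence. The difficulty is that the local mass $\mu(B(z,\delta))$ depends sensitively on which gap or which generation $z$ is approaching, so the length/gap parameters $(\ell_n,g_n)$ and the mass distribution of $\mu$ must be coupled carefully: I would need $\mu$ spread so thinly on each generation that the logarithmic singularity is always damped, while keeping $\mu$ a genuine finite measure with $u\equiv-\infty$ exactly on $K$. Concretely, I expect to introduce parameters, write $u=u_{\text{near}}+u_{\text{far}}$ with $u_{\text{near}}(z)\approx \mu(B(z,2\delta))\ln\delta$, and then verify $\mu(B(z,2\delta))\to 0$ by comparing $\delta$ with the generation scale it resolves. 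The final theorem statement here is identical to the earlier announced lemma, so once the estimate in Step 3 is secured the conclusion is immediate; the author's own proof (deferred past this excerpt) presumably executes exactly this potential-theoretic construction, and I would follow the same outline, treating the uniform rate bound as the one place requiring genuine care.
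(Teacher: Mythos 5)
Your overall plan---a Cantor-type set whose generation lengths shrink super-geometrically, carrying the weak limit of uniform measures on the generations, with $u$ the logarithmic potential---is exactly the paper's construction ($K_m$ is $2^m$ intervals of length $\delta^{t_m}$ with $\delta=1/4$, $t_m=4^{m-1}$, and $\mu=\lim\mu_m$ where $\mu_m$ is uniform on the knot points). But Step 3, which you yourself identify as the entire content of the theorem, is left unexecuted, and the decomposition you sketch for it would fail. You posit $u=u_{\mathrm{near}}+u_{\mathrm{far}}$ with $u_{\mathrm{near}}(z)\approx\mu\left(B(z,2\delta)\right)\ln\delta$ and $u_{\mathrm{far}}$ \emph{bounded}. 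The far part cannot be bounded: at a point $z_0\in K$ one has $\mu\left(B(z_0,t)\right)\approx 2^{-j}$ for $\ell_j\lesssim t\lesssim \ell_{j-1}$, so $u(z_0)\approx-\sum_j 2^{-j}\ln(1/\ell_j)$ up to $O(1)$, and the requirement $u|_K\equiv-\infty$ forces this series to diverge; for $z$ at distance $\delta$ from $K$ the far field is precisely a partial sum of this divergent series, truncated at the resolved generation $k(\delta)$, hence it tends to $-\infty$ as $\delta\to 0$. Thus the blow-up of $u$ near $K$ is carried by the accumulated intermediate shells, not only by the local mass, and your criterion ``$\mu(B(z,\delta))\to 0$ uniformly'' is not sufficient: what must be verified is that the partial sums $S_k=\sum_{j\le k}2^{-j}\ln(1/\ell_j)$ diverge while remaining $o\left(\ln(1/\ell_k)\right)$. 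This is exactly the computation that occupies the bulk of the paper's proof: via the distribution-function formula $U_m^{\mu}(z^0)=\ln\Lambda-\int_{\lambda_m}^{\Lambda}\mu_m(z^0,t)\,t^{-1}dt$, splitting the resulting sum at the first $k$ with $\delta^{t_k}\le\lambda_m$, the choice $\ell_j=\delta^{4^{j-1}}$ gives $S_k\sim 2^k$ against $\ln(1/\ell_k)\sim 4^k$, i.e.\ an upper bound of order $-\sqrt{\left|\ln \mathrm{dist}(z,K)\right|}$, complemented by a lower bound $u(z)\ge c(\varepsilon)+\varepsilon\ln \mathrm{dist}(z,K)$ for every $\varepsilon>0$; the two together yield the stated limit.

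There is a second, smaller gap in your Step 2. A polar compact has no equilibrium measure, so ``equilibrium-type measure'' is undefined, and infinite energy only gives $u=-\infty$ on a set of positive $\mu$-measure, not at every point of $K$ as the theorem demands. The paper obtains $u\equiv-\infty$ on all of $K$ from its explicit uniform upper bound on the potential (valid at every $z^0$ and tending to $-\infty$ with $\mathrm{dist}(z^0,K)$), not from a capacity argument. So while your ansatz points at the right family of examples, the proof as proposed is missing its decisive estimate and rests on one claim (boundedness of the far field) that is incompatible with the conclusion it is meant to support.
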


\begin{proof}
We take a special Cantor set $K\in [0,1]\subset\mathbb{C}^n$ and the
probability measure $\mu$, $supp\mu\subset K $ on it such that, the
potential of $\mu$ tends to $-\infty$ slowly than any $\varepsilon\ln
dist(z,K)$ $\forall\varepsilon>0$.

Consider the segment $[0,1]$, and denote it as $K_0=[a_{01},b_{01}]$, the
length of $K_0$ is 1. Next we proceed as in the construction of Cantor sets:
fix $\delta =1/4$ and the sequence $t_m=4^{m-1}$, $m=1,2,...$ From $%
(a_{01},b_{01})$ we put off the interval $[a_{01}+\delta,b_{01}-\delta]$. We
get the union of two segments, $K_1=[a_{01},a_{01}+\delta]\cup
[a_{02}-\delta ,a_{02}]$. Redenote them as $K_1=[a_{01},a_{01}+\delta]\cup
[b_{01}-\delta ,b_{01}]=[a_{11},b_{11}]\cup [a_{12},b_{12}]$. Distances
between knot-points $a_{11},b_{11},a_{12},b_{12}$ are: 
\begin{equation*}
|b_{1j}-a_{1j}|=\delta,j=1,2, \quad |b_{11}-a_{12}|=1-2\delta.
\end{equation*}
Then with each of these segments we do the same procedure, changing $\delta $
to $\delta^{t_2}$: we get 4 segments, 
\begin{equation*}
\begin{array}{ll}
K_2 & =[a_{11},a_{11}+\delta^{t_2}]\cup [b_{11}-\delta^{t_2},b_{11}]\cup
[a_{12},a_{12}+\delta^{t_2}]\cup [b_{12}-\delta^{t_2},b_{12}] = \\ 
\quad & =[a_{21},b_{21}]\cup [a_{22},b_{22}]\cup [a_{23},b_{23}]\cup
[b_{24},b_{24}],%
\end{array}%
\end{equation*}
with length $\delta^{t_2}$, and with distances between knot points: 
\begin{equation*}
\begin{array}{l}
|b_{2j}-a_{2j}|=\delta^{t_2}, j=1,2,3,4, \\ 
|b_{21}-a_{22}|=\delta -2\delta^{t_2}, |b_{22}-a_{23}|=1-2\delta,
|b_{23}-a_{24}|=\delta -2\delta^{t_2}.%
\end{array}%
\end{equation*}
In $m$-th step we get union of $2^m$ segments 
\begin{equation*}
K_m=[a_{m1},b_{m1}]\cup [a_{m2},b_{m2}]\cup ...\cup [a_{m2^m},b_{m2^m}],
\end{equation*}
with length $\delta^{t_m}$. Note, $K_0 \supset K_1 \supset ... \supset K_m
..., \,\,\,l(K_m)=2^m\delta^{t_m}$. Moreover, the Hausdorff measure of $K_m$
with respect to kernel $h(s)=\ln^{-1}\frac{1}{s}$ is equal to 
\begin{equation}
H^h(K_m)=2^mh(\delta^{t_m}/2)=2^m\ln^{-1}\frac{1}{\delta^{t_m}/2}=\frac{2^m}{%
t_m}\ln^{-1}\frac{2^{1/t_m}}{\delta}.  \label{eq:5}
\end{equation}
Put $\displaystyle K=\bigcap^{\infty}_{m=1}K_m.$ If $\displaystyle \frac{2^m%
}{t_m}\leq C <\infty,$ $m=1,2,...,$ then $H^h(K)<\infty$ and by the
well-known property of the logarithm capacity $C(K)=0$. Therefore, in our
case $t_m=4^{m-1}$, the compact set $K$ is polar and there exists a
probability measure $\mu$, $\text{supp} \mu =K$, such that its potential 
\begin{equation*}
U^\mu(z)=\int\ln |z-w|d\mu (w)
\end{equation*}
is harmonic off $K$, subharmonic on $\mathbb{C}^n $, and $U^{\mu}(z)=-\infty$
$\forall z\in K$.

Now we will specifically construct such measure $\mu$. For $%
K_m=[a_{m1},b_{m1}]\cup [a_{m2},b_{m2}]\cup ...\cup [a_{22^m},b_{22^m}]$ we
put 
\begin{equation}
\mu_m=\frac{\delta (a_{m1})+...+\delta (a_{m2^m})+\delta (b_{m1})+...\delta
(b_{m2^m})}{2\cdot 2^m},  \label{eq:6}
\end{equation}
where $\delta (c)$-discrete probably measure, supported in $c$. The sequence 
$\mu_m$ weakly tends to a measure $\mu_m \mapsto \mu$, $\text{supp} \mu =K.$
Let 
\begin{equation*}
U_m^{\mu}(z)=\int \ln |z-w|d\mu_m(w), U^{\mu}(z)=\int \ln |z-w|d\mu(w)
\end{equation*}
be the potentials. We give some estimations to these potentials.

Take $z^0\in\mathbb{C}^n \setminus K,\lambda =dist(z^0,K)>0$. Then by a
well-known integral formula (see \cite{Fed}). 
\begin{equation}
U^{\mu}_m(z^0)=\int\ln |z^0-w|d\mu_m(w)=\int_{0}^{\infty}[\ln
t]d\mu_m(z^0,t)=\int_{\lambda_m}^{\Lambda}[\ln t]d\mu_m(z^0,t),  \label{eq:7}
\end{equation}
where $\mu_m (z^0,t)=\mu_m (B(z^0,t))$, $\,\,B(z^0,t):|z-z^0|\leq t$ is
disk, $\,\,\Lambda =\max \{dist(z^0,0),dist(z^0,1)\}$, $\,\,\lambda_m=\min
\{|z^0-a_{mj}|,|z^0-b_{mj}|:j=1,2,...,2^m\}$ is the distance from $\,z^0\,$
to the knot set $\,\,K_m^{knot}=%
\{a_{m1},b_{m1},a_{m2},b_{m2},...,a_{m2^m},b_{m2^m}\}$, $\,\lambda_m\geq
\lambda$. Integrating by part (\ref{eq:7}) we get 
\begin{equation*}
\begin{array}{ll}
\displaystyle U^{\mu}_m(z^0) & =\int_{\lambda_m}^{\Lambda}[\ln
t]d\mu_m(z^0,t)=\mu_m(z^0,t)\ln t
|_{\lambda_m}^{\Lambda}-\int_{\lambda_m}^{\Lambda}\frac{\mu_m(z^0,t)}{t}dt=
\\ 
\quad & =\ln \Lambda -\int_{\lambda_m}^{\Lambda}\frac{\mu_m(z^0,t)}{t}dt.%
\end{array}
\end{equation*}%
%

Next we will estimate the potentials $U_m^{\mu}(z^0),U^{\mu}(z^0)$ for
nearby to $K$ point $z^0$, say $\lambda_m<1$. Let $c$ is a knot point, such
that $\lambda_m=|z^0-c|$. The cases $c=0$ or $c=1$ are simple and both are
similar one to one. Other cases reduces to these cases by parting knot set $%
\{a_{m1},b_{m1},a_{m2},b_{m2},...,a_{m2^m},b_{m2^m}\}$ two sets: right and
left from $\text{Re}\, z^0$. Therefore, without loss of generality, we
assume that $c=0$ and $\text{Re} \,z^0\leq 0$. In this case, $\mu_m(0,
t-\lambda_m)\leq \mu_m(z^0,t)\leq \mu_m(0, \sqrt{t^2-\lambda_m^2}).$ If we
denote $\mu_m(t)=\mu_m(0,t)$, then 
\begin{equation}
-\int_{\lambda_m}^{\Lambda}\frac{\mu_m(t-\lambda_m)}{t}dt \leq
-\int_{\lambda_m}^{\Lambda}\frac{\mu_m(z^0,t)}{t}dt \leq
-\int_{\lambda_m}^{\Lambda}\frac{\mu_m(\sqrt{t^2-\lambda_m^2})}{t}dt
\label{eq:8}
\end{equation}
It is clear, that 
\begin{equation*}
\mu_m(\delta)=\frac{1}{2},\mu_m(\delta^{t_2})=\frac{1}{2^2}%
,...,\mu_m(\delta^{t_{m-1}})=\frac{1}{2^{m-1}},\mu_m(\delta^{t_m})=\frac{1}{%
2^m}.
\end{equation*}

Therefore,

\begin{equation*}
\mu_m(t)=\frac{1}{2},\text{ if }\delta\leq t <1-\delta;
\end{equation*}
\begin{equation*}
\mu_m(t)=\frac{1}{2^2},\text{ if }\delta^{t_{2}}\leq t
<\delta-\delta^{t_{2}};
\end{equation*}
\begin{equation}
\vdots  \label{eq:9}
\end{equation}
\begin{equation*}
\mu_m(t)=\frac{1}{2^{m-1}},\text{ if }\delta^{t_{m-1}}\leq t
<\delta^{t_{m-2}}-\delta^{t_{m-1}};
\end{equation*}
\begin{equation*}
\mu_m(t)=\frac{1}{2^{m}},\text{ if }\delta^{t_{m}}\leq t
<\delta^{t_{m-1}}-\delta^{t_{m}}.
\end{equation*}

\vskip  0.9 cm Using (\ref{eq:8}) and (\ref{eq:9}) we can give upper and
lower bounds of $U^{\mu}(z)$.

\textbf{a) Upper bound} of the potential $\,U^{\mu}(z)$. We have 
\begin{equation*}
I_m=-\int_{\lambda_m}^{\Lambda}\frac{\mu_m(z^0,t)}{t}dt \leq
-\int_{\lambda_m}^{\Lambda}\frac{\mu_m(\sqrt{t^2-\lambda_m^2})}{t}dt =
-\int_{0}^{\sqrt{\Lambda^2-\lambda_m^2}}\frac{t}{t^2+\lambda_m^2}\mu_m(t)dt -
\end{equation*}
\begin{equation*}
-\int\limits_{0}^{\delta^{t_{m-1}}-\delta^{t_{m}}}\frac{t}{t^2+\lambda_m^2}%
\mu_m(t)dt
-\int\limits_{\delta^{t_{m-1}}-\delta^{t_{m}}}^{\delta^{t_{m-2}}-%
\delta^{t_{m-1}}}\frac{t}{t^2+\lambda_m^2}\mu_m(t)dt -...
-\int\limits_{\delta -\delta^{t_{2}}}^{1-\delta}\frac{t}{t^2+\lambda_m^2}%
\mu_m(t)dt-
\end{equation*}
\begin{equation*}
-\int\limits_{1-\delta}^{1}\frac{t}{t^2+\lambda_m^2}\mu_m(t)dt
-\int\limits_{\delta^{t_{m}}}^{\delta^{t_{m-1}}-\delta^{t_{m}}}\frac{t}{%
t^2+\lambda_m^2}\mu_m(t)dt
-\int\limits_{\delta^{t_{m-1}}}^{\delta^{t_{m-2}}-\delta^{t_{m-1}}}\frac{t}{%
t^2+\lambda_m^2}\mu_m(t)dt -...
\end{equation*}
\begin{equation*}
...-\int\limits_{\delta}^{1-\delta}\frac{t}{t^2+\lambda_m^2}\mu_m(t)dt= -%
\frac{2}{2^{m+1}} \int\limits_{\delta^{t_{m}}}^{\delta^{t_{m-1}}-%
\delta^{t_{m}}}\frac{tdt}{t^2+\lambda_m^2} -\frac{2^2}{2^{m+1}}
\int\limits_{\delta^{t_{m-1}}}^{\delta^{t_{m-2}}-\delta^{t_{m-1}}}\frac{tdt}{%
t^2+\lambda_m^2} -... 
\end{equation*}
\begin{equation}
...-\frac{2^m}{2^{m+1}} \int\limits_{\delta}^{1-\delta}\frac{tdt}{%
t^2+\lambda_m^2} .
\end{equation}

Therefore 
\begin{equation*}
I_m \leq \frac{2}{2^{m+2}}\ln \frac{\lambda_m^2+\delta^{2t_{m}}}{%
\lambda_m^2+(\delta^{t_{m-1}}-\delta^{t_{m}})^2} +\frac{2^2}{2^{m+2}}\ln 
\frac{\lambda_m^2+\delta^{2t_{m-1}}}{\lambda_m^2+(\delta^{t_{m-2}}-%
\delta^{t_{m-1}})^2} +...
\end{equation*}
\begin{equation*}
...+\frac{2^m}{2^{m+1}}\ln \frac{\lambda_m^2+\delta^2}{\lambda_m^2+(1-%
\delta)^2} =\frac{1}{2^{m}}\ln\frac{\lambda_m^2+\delta^{2t_{m}}}{%
\lambda_m^2+\delta^{2t_{m-1}}} +\frac{2}{2^{m}}\ln \frac{\lambda_m^2+%
\delta^{2t_{m-1}}}{\lambda_m^2+\delta^{2t_{m-2}}} +...
\end{equation*}
\begin{equation}
...+\frac{2^{m-1}}{2^{m}}\ln \frac{\lambda_m^2+\delta^2}{\lambda_m^2+1}%
+o(\delta^{t_{m-1}}) = \frac{1}{2^{m}}\ln (\lambda_m^2+\delta^{2t_{m}})\frac{%
1}{2^{m}}\ln (\lambda_m^2+\delta^{2t_{m-1}})+  \label{eq:10}
\end{equation}
\begin{equation*}
+\frac{2}{2^{m}}\ln (\lambda_m^2+\delta^{2t_{m-2}})+...\frac{2^{m-2}}{2^{m}}%
\ln (\lambda_m^2+\delta^2) -\frac{2^{m-1}}{2^{m}}\ln
(\lambda_m^2+1)+o(\delta^{t_{m-1}}).
\end{equation*}

Let $k=k(z^0)$ be the smallest natural number, such that $\delta^{t_{k}}\leq
\lambda_m.$ We part the last sum in (\ref{eq:10}) into two sums: by $k\leq
j\leq m$ ( $\delta^{t_{j}}\leq\lambda_m$) and by $j<k$ \newline
( $\delta^{t_{j}}>\lambda_m$). For the first sum, by $\delta^{t_{j}}\leq%
\lambda_m$, we write

\begin{equation*}
\frac{1}{2^m}\ln (\lambda_m^2+\delta^{2t_{m}}) +\frac{1}{2^m}\ln
(\lambda_m^2+\delta^{2t_{m-1}}) +\frac{2}{2^m}\ln
(\lambda_m^2+\delta^{2t_{m-2}})+... 
\end{equation*}
\begin{equation*}
...+\frac{2^{m-k-1}}{2^m}\ln (\lambda_m^2+\delta^{2t_{m}})\leq \frac{%
1+2+...+2^{m-k-1}}{2^m}\ln (2\lambda_m^2)= \frac{2^{m-k}-1}{2^m}\ln
(2\lambda_m^2)\leq \frac{1}{2^m}\ln (2\lambda_m^2).
\end{equation*}

Since $t_k =4^{k-1}$ and $\delta^{t_k}\leq \lambda_m$, then $2^k\geq \sqrt{%
\frac{\ln \lambda_m}{\ln\delta}}$. Therefore, the first sum is not greater
than $\frac{1}{2^k}\ln 2\lambda_m^2\leq \sqrt{\ln\frac{1}{\delta}}\frac{%
2\ln\lambda_m+\ln 2}{\sqrt{\ln\frac{1}{\lambda_m} }} $.

For the second sum, by $\delta^{t_j}>\lambda_m$, we have, 
\begin{equation*}
\frac{2^{m-k}}{2^m}\ln (\lambda_m^2+\delta^{2t_{k-1}})+...+\frac{2^{m-2}}{2^m%
}\ln (\lambda_m^2+\delta^2)-\frac{2^{m-1}}{2^m}\ln (\lambda_m^2+1)\leq -%
\frac{1}{2}\ln (\lambda_m^2+1)+
\end{equation*}
\begin{equation*}
+\frac {1}{2^2}\ln (\lambda_m^2+\delta^2)+...+\frac{1}{2^k}\ln
(\lambda_m^2+\delta^{2t_{k-1}}) \leq -\frac{1}{2}\ln (\lambda_m^2+1)+\frac{1%
}{2^2}\ln (2\delta^2)+... +\frac{1}{2^k}\ln (2\delta^{2t_{k-1}}) \leq 
\end{equation*}
\begin{equation*}
\leq -\frac{1}{2}\ln (\lambda_m^2+1)+\frac{1}{2^2}\ln (2\delta^2)+...+\frac{1%
}{2^{k+1}}\ln (2\delta^2)=-\frac{1}{2}\ln (\lambda_m^2+1)+\frac{1}{2}\ln
(2\delta^2).
\end{equation*}

Therefore, for large enough $m$ is true the following estimation 
\begin{equation}
U^{\mu}_m(z^0)\leq \sqrt{\ln \frac{1}{\delta}}\,\,\frac{\ln \lambda_m+\ln 2}{%
\sqrt{\ln \frac{1}{\lambda_m}}}-\frac{1}{2}\ln (\lambda_m^2+1)+\ln\Lambda+%
\frac{1}{2}\ln 2\delta+o(\delta^{t_{m-1}}).  \label{eq:11}
\end{equation}
For arbitrary $z^0\in\mathbb{C}^n \setminus K$ the estimation (\ref{eq:11})
will be

\begin{equation*}
U^{\mu}_m(z^0)\leq 2\sqrt{\ln \frac{1}{\delta}}\,\,\frac{\ln
dist(z^0,K_m^{knot})+\ln 2}{\sqrt{\ln \frac{1}{dist(z^0,K_m^{knot})}}}\,\,-
\end{equation*}
\begin{equation}
-\frac{1}{2}\ln (dist^2(z^0,K_m^{knot})+1)+\ln\Lambda+\frac{1}{2}\ln
2\delta+o(\delta^{t_{m-1}}).  \label{eq:12}
\end{equation}
Tending $m \to \infty$ in (4.10) we take

\begin{equation*}
U^{\mu}(z^0)\leq 2\sqrt{\ln \frac{1}{\delta}}\,\,\frac{\ln dist(z^0,K)+\ln 2%
}{\sqrt{\ln \frac{1}{dist(z^0,K)}}}\,\,-
\end{equation*}
\begin{equation}
-\frac{1}{2}\ln (dist^2(z^0,K)+1)+\ln\Lambda+\frac{1}{2}\ln 2\delta.
\label{eq:13}
\end{equation}
From (\ref{eq:13}), in particular, follows, that $U^{\mu}(z^0)=-\infty,
\forall z^0\in K$.

\textbf{b) Lower bound.} As above, we have:

\begin{equation*}
I_m=-\int\limits_{\lambda_m}^{\Lambda}\frac{\mu_m(z^0,t)}{t}dt \geq
-\int\limits_{\lambda_m}^{\Lambda}\frac{\mu_m(t-\lambda_m)}{t}dt =
-\int\limits_{0}^{\Lambda-\lambda_m}\frac{\mu_m(t)}{t+\lambda_m}dt=
\end{equation*}

\begin{equation*}
= -\int\limits_{0}^{\delta^{t_{m-1}}-\delta^{t_{m}}}\frac{\mu_m(t)}{%
t+\lambda_m}dt
-\int\limits_{\delta^{t_{m-1}}-\delta^{t_{m}}}^{\delta^{t_{m-2}}-%
\delta^{t_{m-1}}}\frac{\mu_m(t)}{t+\lambda_m}dt -... -\int\limits_{\delta
-\delta^{t_{2}}}^{1-\delta}\frac{\mu_m(t)}{t+\lambda_m}dt
-\int\limits_{1-\delta}^{1}\frac{\mu_m(t)}{t+\lambda_m}dt \geq
\end{equation*}
\begin{equation*}
\geq -\frac{2}{2^{m+1}} \int\limits_{0}^{\delta^{t_{m-1}}-\delta^{t_{m}}}%
\frac{dt}{t+\lambda_m} -\frac{2^2}{2^{m+1}} \int\limits_{\delta^{t_{m-1}}-%
\delta^{t_{m}}}^{\delta^{t_{m-2}}-\delta^{t_{m-1}}}\frac{dt}{t+\lambda_m}
-... -\frac{2^m}{2^{m+1}} \int\limits_{\delta-\delta^{t_2}}^{1-\delta}\frac{%
dt}{t+\lambda_m} -\frac{2^{m+1}}{2^{m+1}} \int\limits_{1-\delta}^{1}\frac{dt%
}{t+\lambda_m}=
\end{equation*}
\begin{equation*}
= -\frac{1}{2^{m}}\ln \frac{\lambda_m+\delta^{t_{m-1}}-\delta^{t_{m}}}{%
\lambda_m} -\frac{1}{2^{m-1}}\ln \frac{\lambda_m+\delta^{t_{m-2}}-%
\delta^{t_{m-1}}}{\lambda_m+\delta^{t_{m-1}}-\delta^{t_{m}}}-... -\frac{1}{2}%
\ln \frac{\lambda_m+1-\delta}{\lambda_m+\delta-\delta^{t_{2}}} -\ln \frac{%
\lambda_m+1}{\lambda_m+1-\delta}=
\end{equation*}
\begin{equation*}
=\frac{\ln\lambda_m}{2^m} +\frac{\ln
(\lambda_m+\delta^{t_{m-1}}-\delta^{t_{m}})}{2^m} +\frac{\ln(\lambda_m+%
\delta^{t_{m-2}}-\delta^{t_{m-1}})}{2^{m-1}}+... +\frac{\ln(\lambda_m+1-%
\delta)}{2}-\ln(\lambda_m+1).
\end{equation*}

Therefore 
\begin{equation*}
I_m \geq -\ln(\lambda_m+1) +\frac{\ln(\lambda_m+1-\delta)}{2} +\frac{%
\ln(\lambda_m+\delta-\delta^{t_{2}})}{2^2}+
\end{equation*}
\begin{equation*}
+\frac{\ln(\lambda_m+\delta^{t_2}-\delta^{t_{3}})}{2^3} +... +\frac{%
\ln(\lambda_m+\delta^{t_{m-2}}-\delta^{t_{m-1}})}{2^{m-1}} +\frac{\ln
(\lambda_m+\delta^{t_{m-1}}-\delta^{t_{m}})}{2^m} \frac{\ln\lambda_m}{2^m}%
\geq
\end{equation*}
\begin{equation*}
\geq -\ln(\lambda_m+1) +\frac{\ln(1-\delta)}{2} +\frac{\ln(\delta-%
\delta^{t_{2}})}{2^2} +... +\frac{\ln(\delta^{t_{k-1}}-\delta^{t_{k}})}{2^k}+
\end{equation*}
\begin{equation*}
+\frac{\ln \lambda_m}{2^{k+1}} +... +\frac{\ln \lambda_m}{2^{m-1}} +\frac{%
\ln \lambda_m}{2^{m}} +\frac{\ln \lambda_m}{2^{m}} = c(k) +\frac{\ln
\lambda_m}{2^{k}}\left(1-\frac{1}{2^{m-k}}\right),
\end{equation*}
where $c(k)=const$, independent of $m$. Hence, for any fixed $k\in\mathbb{C}%
^n$ we have 
\begin{equation}
U_m^{\mu}(z^0)\geq\ln \Lambda +c(k)+\frac{\ln\lambda_m}{2^k}\left(1-\frac{1}{%
2^{m-k}}\right).  \label{eq:14}
\end{equation}
As above we can prove (\ref{eq:14}) for arbitrary $z^0\notin K$: 
\begin{equation}
U_m^{\mu}(z^0)\geq 2\left(\ln \Lambda +c(k)+\frac{\ln dist(z^0,K_m^{knot})}{%
2^k}\left(1-\frac{1}{2^{m-k}}\right)\right).  \label{eq:15}
\end{equation}
Tending $m\rightarrow\infty$ from (\ref{eq:15}) we conclude, that for any $%
\varepsilon >0$ there exists constant $c(\varepsilon )>-\infty$: 
\begin{equation*}
U_m^{\mu}(z^0)\geq c(\varepsilon ) + \varepsilon\ln dist(z^0,K),\forall
z^0\in\mathbb{C}^n.
\end{equation*}
Theorem is proved.
\end{proof}

Now we can proceed with our example,

\begin{example}
We consider the manifold $X=\overline{\mathbb{C}}\setminus K$, where $K$ is
compact, built in the previous point. As special exhaustive function we put $%
\phi (z)=-U^{\mu}(z)$. Then $\phi (z)$ is harmonic on $X\setminus \{\infty\}$%
, $\phi(\infty)=-\infty$ and $\phi (z)\rightarrow\infty$ as $z\rightarrow K$%
. Therefore, $(X,\phi)$ is $S^*$-parabolic.

Polynomials on $X$ are functions $f\in O(X)$ for which $\ln |f|\leq C+d\phi
(z),d\in\mathbb{N}$. We show that this like functions are trivial, i.e. $%
f=const$. It follows, that on $X$ there are not nontrivial polynomials, $X$
is nonregular.

This easily follows from the next Proposition, which seems clear and there
is a proof of them: let $K $ is a polar compact on the complex plane $%
\mathbb{C}, $ where $U\supset K$ is some neighborhood. If $f(z)\in
O(U\setminus K)$ and 
\begin{equation}
\overline{\lim_{z\rightarrow K}}\,\,|f(z)|\cdot dist(z,K)=0, \,
\label{eq:16}
\end{equation}
then $f(z)\in O(U)$.

Since we cannot find the proof of this proposition, we provide it for our
compact $K$. Let $f\in O(X):\ln |f|\leq C+k\phi (z)$.

First we take a closed curve $\gamma =\gamma_m$, containing within itself
the $K\subset K_m=[a_{m1},b_{m1}]\cup [a_{m2},b_{m2}]\cup ... \cup
[a_{22^m},b_{22^m}]:\gamma$ bounds above by a part of $\{\text{Im} z=r\},r>0$%
, below by $\{\text{Im} z=-r\}$ and from the sides by a part $\{\text{Re}
z=a_{mj}-r\},\{\text{Re} z=b_{mj}+r\}$. The length of $\gamma $ is equal 
\begin{equation}
l(\gamma )=2\cdot 2^m(\delta^{t_m}+2r)+2\cdot 2^mr=3\cdot
2^{m+1}r+2^{m+1}\delta^{t_m}.  \label{eq:17}
\end{equation}
To complete of the proof we write the Cauchy formula 
\begin{equation}
f(z)=\frac{1}{2\pi i}\int\limits_{|\xi |=2}\frac{f(\xi )}{\xi -z}d\xi -\frac{%
1}{2\pi i}\int\limits_{\gamma }\frac{f(\xi )}{\xi -z}d\xi,\,\, z\in
B(0,2)\setminus \hat{\gamma},  \label{eq:18}
\end{equation}
where $\hat{\gamma}$ is the polynomial convex hull of $\gamma$.

For second integral of (\ref{eq:18}) we have 
\begin{equation*}
\left|\int\limits_{\gamma }\frac{f(\xi )}{\xi -z}d\xi\right| \leq \frac{%
||f||_{\gamma}}{dist (z,\gamma)}\,l(\gamma ) \leq \frac{e^{C+k||\phi||_{%
\gamma}}}{dist (z,\gamma)}(3\cdot 2^{m+1}r+2^{m+1}\delta^{t_m}) \leq C_1
e^{k ||\phi||_{\gamma}}(2^{m+3}r+2^{m+1}\delta^{t_m}).
\end{equation*}

According to (\ref{eq:4}) for arbitrary fixed $\varepsilon >0 $ there exists 
$\gamma = \gamma _m$ such,that $||\phi||_{\gamma} <-\varepsilon \ln dist
(\gamma, K)$. Therefore, $\left|\int\limits_{\gamma }\frac{f(\xi )}{\xi -z}%
d\xi\right| \leq C_2r^{-\varepsilon k}2^m(r+\delta^{t_m}). $

Now we choose $\varepsilon =1/2k$ and $r = 1/2^{4m}$. Then $r^{-\varepsilon
k}2^m(r+\delta^{t_m})=\frac{1}{2^m}+2^{3m}\delta^{t_m}\rightarrow 0$ as $%
m\rightarrow\infty$. We see that, the second integral in (\ref{eq:18}) tends
zero, which means the function 
\begin{equation*}
f(z)=\frac{1}{2\pi i}\int\limits_{|\xi |=R}\frac{f(\xi )}{\xi -z}d\xi
\end{equation*}
and holomorphic in $|z|<R$. Consequently $f\in O(\overline{\mathbb{C}}),$
i.e. $f\equiv const$.
\end{example}


\section{Regular parabolic manifolds.}

\label{sec4} 

As we have seen in section \ref{sec3} not every parabolic manifold has a
large supply of polynomials. On the other hand most important examples of
parabolic manifolds like affine algebraic submanifolds (with their canonical
special exhaustion function), complements of zero sets of Weierstrass
polynomials (see \cite{AS2}) do have a rich class of polynomials, namely in
these examples polynomials are dense in the corresponding spaces of analytic
functions. 

\begin{example}
\textbf{Algebraic set} $X\subset \mathbb{C}^N$, $\dim A=n$. In this case by
the well-known theorem of W. Rudin \cite{R}, we can assume, that (after an
appropriate transformation) 
\begin{equation*}
X\subset \{ w=(w^{\prime },w^{\prime \prime
})=(w_1,...,w_n,w_{n+1},...,w_N):||w^{\prime \prime }||<A(1+||w^{\prime}||^
B \},
\end{equation*}
where $A,B$ are constants. Then the restriction $\rho |_X$ of the function $%
\rho (w)=\ln ||w^{\prime }||$ may be special exhaustion function on $X$. It
is clear, that polynomials on $X$ are restrictions to $X$ of polynomials $p
(w^{\prime },w^{\prime \prime })$. Therefore, $\mathcal{P}_{\rho}(X)$ is
dense in $O(X)$.
\end{example}

\begin{example}
\textbf{Complement of Weierstrass algebroid set} (see Theorem \ref{thm27}).
Let 
\begin{equation*}
A=\{z=(^{\prime }z,z_n)=(z_1,z_2,...,z_n)\in\mathbb{C}^n:F(^{\prime
}z,z_n)=z_n^k+f_1(^{\prime }z)z_n^{k-1}+...+f_k(^{\prime }z)=0\}
\end{equation*}
be a Weierstrass polynomial set, where $f_j\in O(\mathbb{C}^{n-1})$ are
entire functions, $j = 1,2,...,k$, $k > 1$. Then $X=\mathbb{C}^n\setminus A$
with exhaustion function $\rho (z)=-\ln |F(z)|+\ln (|^{\prime}z|+|F(z)-1|^2)$
is $S^*$-parabolic. If $p(z,\tau)$ is a polynomial in $\mathbb{C}^{n+1}$,
then $p(z,1/F(z))$ is a polynomial on $X=\mathbb{C}^n \setminus A$.

It is not difficult to prove, that $\left\{p(z,1/F(z))\right\}_p $ is dense
in $O(X)$.
\end{example}

Motivated by these examples, we give the following definiton:

\begin{definition}
${S}^*$-parabolic manifold $(X,\rho)$ calls \textit{regular} in case if the
space of all polynomials $\mathcal{P}_{\rho}(X)$ is dense in $O(X)$.
\end{definition}

Our next example shows that non triviality of the polynomial space $\mathcal{%
P}_{\rho}(X)$ does not always guarantee the regularity of $X$.

\begin{example}
We add to compact $K$, from example 4.2 one more point:\newline
$E=K\cup \{z^0\}$, $z^0\notin K$. The manifold $X=\overline{\mathbb{C}}%
\setminus E$ with exhaustive function $\rho (z)=-U^{\mu}(z)-\ln |z-z^0|$ be $%
S^*$-parabolic. On $X$ there are polynomials, an example, $f(z)=(z-z^0)^m$,
but the space of all polynomials $\mathcal{P}_{\rho}$ is not dense in $O(X)$%
: the function $f(z)=\frac{1}{z-z^{\prime }}$, where $z^{\prime }\in K,$
cannot be approximated by polynomials.
\end{example}

In search for more examples of ${S}$-parabolic manifolds one may look at
closed complex submanifolds of regular ${S}^{*}$-parabolic manifolds. Since
such manifolds are in particular parabolic, there exits, in view of Theorem %
\ref{thm210}, continuous linear extension operators for analytic functions
on this submanifold to the ambient space. However the mere existence of
continuous extension operators will not, in general give regularity as the
example, in the previous section shows.

Recall that for a ${S}^{*}$-parabolic manifold $\left( X, \rho \right)$ we
will always consider, unless otherwise stated, the canonical grading on $%
O\left( X\right) $ given by $\rho ,$ and for a closed complex submanifold $V$
of $X$ we will provide $O\left( V \right) $ with the induced grading, i.e.
the grading coming from the sup norms on $V \cap \left( z:\rho \left(
z\right) \leq k\right) $, $k=1,2,...\,.$ With this convention we have:

\begin{prop}
Let $\left( X,\rho \right) $ be a regular ${S}^{*}$-parabolic Stein manifold
and let $V$ be a closed complex submanifold of $X.$ If there exits a tame
linear extension operator from $O\left( V\right) $ into $O\left( X\right) $
then $V$ becomes a regular ${S}^*$-parabolic manifold.
\end{prop}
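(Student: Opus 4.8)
The plan is to use the tame extension operator to realise $O(V)$, with its induced grading, as a tame complemented subspace of $O(X)$, to transport the power-series-space structure of $O(X)$ to $O(V)$, and then to produce polynomials on $V$ by restricting polynomials from $X$. The starting observations are elementary. The restriction map $R\colon O(X)\to O(V)$ is tame with shift $0$, since $\|Rf\|_{V\cap\overline{D}_k}\le\|f\|_{\overline{D}_k}$ for every $k$, and $R\circ E=\mathrm{Id}_{O(V)}$ by hypothesis. Hence $P:=E\circ R$ is a tame projection of $O(X)$ onto $E(O(V))$, and $E$ is a tame isomorphism of $\bigl(O(V),\{\|\cdot\|_{V\cap\overline{D}_k}\}\bigr)$ onto the range of $P$, with tame inverse $R$. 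Thus $O(V)$, graded by $\rho$, is tamely isomorphic to a tame complemented subspace of $\bigl(O(X),\{\|\cdot\|_{\overline{D}_k}\}\bigr)$.

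Step 1 shows that $V$ is $S^{*}$-parabolic. By the structure theorem of Section~\ref{sec1} (\cite{AS2}) the graded space $\bigl(O(X),\{\|\cdot\|_{\overline{D}_k}\}\bigr)$ is tamely isomorphic to a power series space of infinite type $\Lambda_\infty(\alpha)$. The tame linear-topological invariants characterising tame isomorphism to such spaces (the tame analogues of $DN$ and $\Omega$; see \cite{poppenbergt}) pass to tame complemented subspaces, so $\bigl(O(V),\{\|\cdot\|_{V\cap\overline{D}_k}\}\bigr)$ is tamely isomorphic to some $\Lambda_\infty(\beta)$. The criterion of Section~\ref{sec1} (\cite{AS2}), that a Stein manifold whose function algebra is, for some exhaustion grading, tamely isomorphic to a power series space of infinite type is $S^{*}$-parabolic, then furnishes a continuous special exhaustion function $\rho_V$ on $V$. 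Following its construction one may arrange that the sublevel grading of $\rho_V$ is tamely equivalent to the induced grading $\{\|\cdot\|_{V\cap\overline{D}_k}\}$; applying the maximum principle on the $O(V)$-convex sublevel sets, this equivalence becomes an inclusion of sublevel sets up to an affine change of parameter, i.e. there are constants $a,b>0$ with
\[
\rho(w)\le a\,\rho_V(w)+b\qquad(w\in V).
\]

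Step 2 establishes regularity. The comparison above shows that the restriction to $V$ of a polynomial on $X$ is a polynomial on $V$: from $\ln|p|\le d\,\rho^{+}+c$ on $X$ we get, on $V$, $\ln|p|\le d\,(\rho|_V)^{+}+c\le da\,\rho_V^{+}+(db+c)$, so $p|_V\in\mathcal{P}_{\rho_V}^{\,da}(V)$. Given $g\in O(V)$, put $f=E(g)\in O(X)$. Since $X$ is regular there are polynomials $p_j\in\mathcal{P}_\rho(X)$ with $p_j\to f$ in $O(X)$, and, restriction being continuous, $p_j|_V\to f|_V=R E(g)=g$ in $O(V)$. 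As each $p_j|_V$ is a polynomial on $V$, the function $g$ lies in the closure of $\mathcal{P}_{\rho_V}(V)$. Hence $\mathcal{P}_{\rho_V}(V)$ is dense in $O(V)$ and $(V,\rho_V)$ is a regular $S^{*}$-parabolic manifold.

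The hard part is Step 1, and above all the choice of a special exhaustion function on $V$ that is affinely comparable to $\rho|_V$, equivalently whose grading is tamely equivalent to the induced one. This is exactly where the \emph{tameness} of $E$, and not its mere continuity, is essential: a continuous extension only reflects that $V$ is parabolic, so that $O(V)\cong O(\mathbb{C}^{\dim V})$ as a Fr\'echet space (Theorem~\ref{thm29}), and the example of Section~\ref{sec3} shows that this alone does not force the polynomials to be dense; it is the graded structure transported by the tame $E$ that promotes parabolicity to regular $S^{*}$-parabolicity and keeps the $\rho_V$-degrees of the restricted polynomials finite. Verifying the compatibility of $\rho_V$ with the induced grading, by tracking the constants in the tame criterion so that the comparison stays affine, is the delicate point; once it is in hand, Steps 1 and 2 combine to give the proposition.
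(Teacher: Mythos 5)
Your Step 2 and the endgame are sound: once one has a special exhaustion function $\rho_V$ on $V$ with $\rho|_V \le a\,\rho_V + b$, restrictions of $\rho$-polynomials are $\rho_V$-polynomials and density follows from the regularity of $X$ and continuity of restriction; this is exactly how the paper concludes. The genuine gap is in Step 1, and it sits at the crux of the proposition. First, your claim that tame isomorphy to an infinite-type power series space is inherited by tamely complemented subspaces is not a formality: what passes trivially to a tamely complemented subspace are \emph{necessary} invariants of tame $DN$/$\Omega$ type, but the converse --- that such invariants force a tame power series structure --- is a deep theorem available only under additional hypotheses (Hilbertian gradings, stability-type conditions on the exponent sequence; cf.\ \cite{poppenbergt}), none of which you verify for the induced grading on $O(V)$. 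Second, and more seriously: even granting that $\bigl(O(V),\{\left\Vert{\cdot}\right\Vert_{V\cap\overline{D}_k}\}\bigr)$ is tamely isomorphic to some $\Lambda_\infty(\beta)$ and invoking the criterion of \cite{AS2} to obtain \emph{some} special exhaustion function $\rho_V$, the affine comparison $\rho|_V \le a\,\rho_V + b$ is precisely what keeps the $\rho_V$-degrees of restricted polynomials finite --- and you leave it at ``following its construction one may arrange'' and later concede it is ``the delicate point.'' A proof cannot defer this step: without redoing the construction behind the criterion with the induced grading and tracking the shifts, Step 1 yields only $S^{*}$-parabolicity of $V$ for an exhaustion function with no quantitative relation to $\rho$, and regularity does not follow (the example of Section \ref{sec3} shows that parabolicity alone, or even nontriviality of the polynomial class, is not enough).

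For comparison, the paper bypasses the structure theory entirely and manufactures the exhaustion function with the comparison built in. It considers the family $\mathcal{A}$ of $u\in psh(V)$ with $u\le L_u+\rho^{+}$ on $V$ and $u\le 0$ on $V\cap D_{A+2}$, where $A$ is the tame shift of $E$; writes each $u$, via the Lelong--Bremermann lemma, as an upper-regularized limit of $\ln|f_m|/\alpha_m$ with $f_m\in O(V)$; transfers the Hartogs-type bounds $\left\Vert f_m\right\Vert_k\le Ce^{(k+L+1)\alpha_m}$ through $E$ at the \emph{uniform} cost $A$; and uses the maximality of $\rho$, i.e.\ $V^{*}(z,\overline{D}_1)=[\rho-1]^{+}$ on $X$, to get the uniform estimate $u\le V^{*}({\cdot},\overline{D}_1)|_V+C_0$ for all $u\in\mathcal{A}$. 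The regularized envelope $\tau$ of $\mathcal{A}$ then satisfies $\rho\le\tau\le\rho+C$ on $V$ (the lower bound because $[\rho-(A+2)]|_V\in\mathcal{A}$) and is maximal outside $V\cap\overline{D}_{A+2}$: a special exhaustion function affinely comparable to $\rho|_V$ by construction, with slope $a=1$. The uniform shift $A$ is exactly where tameness, as opposed to mere continuity of $E$, enters --- your intuition on that point is correct, but your argument never converts it into the estimate the conclusion needs. To salvage your route you would have to prove the tame complemented-subspace theorem you invoke and extract the grading comparison from its proof; otherwise replace Step 1 by a direct envelope construction as above.
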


\begin{proof}
Fix a continuous linear extension operator $\,E:$ $O\left( V\right)
\rightarrow O\left( X\right) $ with the property: 
\begin{equation*}
\exists A>0 \,\,\text{such, that } \forall k\text{ }\exists \text{ }%
C_{k}>0:\left\Vert E\left( f\right) \right\Vert _{k}\leq C_k \left\Vert
f\right\Vert _{k+A}\text{ }\forall f\in O\left( V\right) .
\end{equation*}%
Let as usual 
\begin{eqnarray*}
\mathcal{A} = \left \{ u(z) \in psh \left( V\right) : \,\,u(z) \leq L_u +
\rho^+ (z) \,\,\forall z \in V, \,\,\,u \leq 0 \,\,\text { on } \,\, V\cap
D_{A+2} \right\},
\end{eqnarray*}%
where $D_{k}=\left( z\in X:\,\,\rho \left( z\right) <k\right) $ , $%
k=1,2,...\,.$

Fix a $\,\,u\in \mathcal{A}.$ In view of Lelong Bremermann Lemma \cite{B}, $u
$ has a represantation of the form:%
\begin{equation*}
u\left( z\right) = \overline{\lim_ {\xi \rightarrow z}}\,\, {\overline{%
\lim_{m \rightarrow \infty}}} \,\,\frac{\ln \left\vert f_{m}\left( \xi
\right) \right\vert }{\alpha _{m}}
\end{equation*}%
for some $f_{m}\in O\left( V\right) \,\,$ and $\,\,\alpha _{m}\in \mathbb{N},
$ $\,m=1,2,...\,.$

In view of Hartog's lemma, for each $k=1,2,...$, we can find a constant 
\newline
$C=C\left( k\right), $ such that 
\begin{equation*}
\left\Vert f_{m}\right\Vert _{k}\leq Ce^{\left( k+L+1\right) \alpha
_{m}},m=1,2,..., \,\,\, L=L_u.
\end{equation*}%
Hence%
\begin{equation*}
\left\Vert E\left( f_{m}\right) \right\Vert _{k}\leq Ce^{\left(
k+A+L+1\right) \alpha _{m}},\,\,m=1,2,\text{...},
\end{equation*}%
and so the sequence of plurisubharmonic functions 
\begin{equation*}
\left\{ \frac{\ln \left\vert E\left( f_{m}\left( \xi \right) \right)
\right\vert }{\alpha _{m}}\right\} _{m}
\end{equation*}
is a locally bounded from above family. Let 
\begin{equation*}
\widetilde{u}\left( z\right) =\overline{\lim _{\xi \rightarrow z}}\,\,%
\overline{\lim _{m \rightarrow \infty}}\,\,\frac{\ln \left\vert E\left(
f_{m}\right) \left( \xi \right) \right\vert }{\alpha _{m}}.
\end{equation*}%
The function $\widetilde{u}$ defines a plurisubharmonic function on $X$ and
has the growth estimate:%
\begin{equation*}
\widetilde{u}(z)\leq \rho (z) +A+L+2,
\end{equation*}%
in view of the maximality of $\rho.$ Since, the Green function $V^* (z, 
\overline {D}_1)$ on $X$ is equal $[\rho -1]^+,$ then 
\begin{equation*}
\widetilde{u}(z)\leq V^* (z, \bar {D_1})+C_0, \,\,z\in X.
\end{equation*}

By construction on $V$ we have $u\leq \widetilde{u}|_V.$ It follows that%
\begin{equation*}
u\left( z\right) \leq V^{\ast }\left( z,\overline{D}_{1}\right) +C_{0}\text{ 
}\forall \, z\in V \,\,\text{and } u \in \mathcal{A}.
\end{equation*}
In particular the family $\mathcal{A}$ is a locally bounded from above of
plurisubharmonic functions on $V.$ In view of the above considerations the
free envelope 
\begin{equation*}
\tau \left( z\right) = \overline {\lim _{\xi \rightarrow z}} \sup_{\ u \in 
\mathcal{A}} u \left( \xi \right)
\end{equation*}%
defines a plurisubharmonic function on $V$ that is maximal outside a compact
set $\,\,V \bigcap \overline {D}_ {A+2}\,\,$ and satisfies the estimates:%
\begin{equation*}
\exists \text{ }C>0:\,\,\rho \left( z\right) \leq \tau \left( z\right) \leq
\rho \left( z\right) +C,
\end{equation*}%
since $[\rho -(A+2)]_V \in \mathcal{A}.$ Hence\ $\tau $ provides a special
exhaustion function for $V.$ Moreover since the restriction of a $\rho -$%
polynomial to $V$ is a $\tau -$polynomial, the regularity of $V$ follows.
\end{proof}

\begin{remark}
The existence of a tame linear extension operator as above is of course
related to the tame splitting of tame short exact sequence:%
\begin{equation*}
0\rightarrow I\rightarrow O\left( X\right) \overset{R}{\rightarrow }O\left(
V\right) \rightarrow 0,
\end{equation*}%
where $R$ is the restriction operator and $I$ is the ideal sheaf of $V$ with
the subspace grading induced from $O\left( X\right) .$ Tame splitting of
short exact sequences in the category of graded Fr\'{e}chet spaces were
studied by various authors. We refer the reader to \cite{pop} for a survey
and for structural conditions on the underlying Fr\'{e}chet nuclear spaces
which ensure that short exact sequences in this category split.
\end{remark}

\begin{remark}
It was shown in \cite{AKT2} that in $\mathbb{C}^{N}$ closed complex
submanifolds that admit tame extension operators are precisely the affine
algebraic submanifolds of $\mathbb{C}^{N}.$ Since there are non algebraic
regular ${S}^{*}$-parabolic Stein manifolds of $\mathbb{C}^{N},$ the
statement of the Proposition is not an if and only if statement.
\end{remark}

Our next result deals with the linear topological structure of the graded
space of analytic functions $\left( O\left( X\right) ,{\rho }\right) $ on a $%
S^*-$parabolic Stein manifold $\left( X,{\rho }\right) $\ . Recall that for
a given $S^{\ast }-$ parabolic Stein manifold $\left( X,{\rho }\right) $, we
will always assume that the special exhaustion function $\rho $ is maximal
outside a compact set that lies in $\left\{ z:\,\,\rho \left( z\right)
<0\right\} $ and equip the Fre\'{c}het space $O\left( X\right) $ with the
grading $\left( \left\Vert {\cdot}\right\Vert _{k}\right) _{k=1}^{\infty }:$ 
\begin{equation*}
\left\Vert f\right\Vert _{k} = \sup_{z \in D_{k}}\left\vert f\left( z\right)
\right\vert ,
\end{equation*}
where $D_{k} = \left( z: \,\rho \left( z\right) < k\right) $ , $k=$1,2,....
. On $O\left( \mathbb{C}^{n}\right) $ the canonical grading will be the one
coming from the norm system 
\begin{equation*}
\left\Vert f\right\Vert _{k} = \sup_{\left\Vert z\right\Vert \leq e^{k\text{ 
}}}\left\vert f\left( z\right) \right\vert, \,\,\,k=1,2,...\,.
\end{equation*}
We have seen that with a suitable special exhaustion function $\rho ,$ $%
\left( O\left( X\right) ,{\rho }\right) $ is tamely isomorphic to $O\left( 
\mathbb{C}^{n}\right) $ with the canonical grading. Unfortunately tame
isomorphisms between $S^{\ast }-$ parabolic Stein manifolds do not
necessarily map polynomials into polynomials even when the spaces are
regular as the multiplication operator with the exponential function on $%
O\left( \mathbb{C}\right) $ shows. However our next result states that for a
regular $S^{\ast }-$ parabolic Stein manifold $\left( X,{\rho }\right) $
there exits a positive constant $C$ and a tame isomophism $T$ from $O\left( 
\mathbb{C}^{n}\right) $ , $n=\dim X$ , onto $\left( O\left( X\right) ,{C\rho 
}\right) $ that maps polynomials into $\rho-$polynomials.

In the proof below we will repeatedly use a fact from functional analysis,
namely the \textit{Dynin-Mitiagin }theorem which states that if a nuclear Fre%
\'{c}het space $\left( Y,{\left\Vert _{{\cdot}}\right\Vert _{k}}\right) $
has a basis $\left\{ g_{m}\right\}$, then it is isomorphic, via the
correspondence 
\begin{equation*}
\sum x_{m}g_{m}\leftrightarrow \left( x_{m}\right) _{m}, 
\end{equation*}%
to the K\"{o}the space:%
\begin{equation*}
\left( K,{\left\vert {\cdot}\right\vert _{k}}\right) = \left\{ x=\left(
x_{m}\right) _{m}:\left\vert x\right\vert _{k} = \sum \left\vert
x_{m}\right\vert \left\Vert g_{m}\right\Vert _{k}<\infty ,\forall k
=1,2,...\right\} . 
\end{equation*}

\bigskip

As usual, for sequences of real numbers $\left\{ \alpha _{k}\right) $ and $%
\left( \beta _{k}\right\} $ the notation $\alpha _{k}\prec \beta _{k}$ means
that there exits a constant $c>0$ that does not depend upon $k$, such that $%
\alpha _{k}\prec c\beta _{k}$ , $\forall k.$

\begin{theorem}
Let $\left( X,{\rho }\right) $ be a regular $S^{\ast }-$ parabolic Stein
manifold. There exits a polynomial basis $\left\{ p_{m}\right\} $ \ for $%
O\left( X \right) $ and a $C>0$, such that the linear transformation $\,T\,$
defined through $\,T\left( p_{m}\right) =$ $z^{\sigma \left( m\right) },$ $%
\,\,m=1,2,...,\,\,$ gives a tame isomorphism between $\left( O\left(
X\right),\,{C \rho }\right) \,\,$ and $\,\,O\left( C^{n}\right) $ with the
usual grading.
\end{theorem}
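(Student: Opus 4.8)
The plan is to pass, via the Dynin--Mitiagin theorem, to the coordinate pictures of both spaces and to realize $T$ as a weighted coordinate permutation between two infinite type power series spaces. The heart of the matter will be to manufacture a Schauder basis of $O(X)$ consisting of $\rho$-polynomials that is \emph{tamely} equivalent to the standard basis of the power series space $\Lambda_\infty(\alpha)$ to which $(O(X),\rho)$ is tamely isomorphic (by the last theorem of Section~\ref{sec1}), and whose degrees grow at the rate dictated by $\alpha$. The single rescaling constant $C$ will absorb the discrepancy between the growth constant $c_0=2\pi(n!)^{1/n}\left(\int_X(dd^c\rho)^n\right)^{-1/n}$ of $\Lambda_\infty(\alpha)$ and the constant $(n!)^{1/n}$ attached to the monomial enumeration of $O(\mathbb{C}^n)$.

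First I would record the growth characterization $f\in\mathcal{P}_\rho^d$ iff $\|f\|_k\le Me^{dk}$ for some $M$ and all $k$, and the fact that a polynomial of exact degree $d$ satisfies $\lim_k k^{-1}\ln\|f\|_k=d$ (the upper bound is the defining estimate; the lower bound comes from $\limsup_{\rho\to\infty}\rho^{-1}\ln|f|=d$). Next I would pin down the asymptotics of $N_d:=\dim\mathcal{P}_\rho^d$. The bound $N_d\prec d^n$ is exactly the finite-dimensionality theorem proved above. For the matching lower bound I would invoke \emph{regularity}: since polynomials are dense and $V^*(\cdot,K)$ is continuous on the $S^*$-parabolic $X$, a Bernstein--Walsh type estimate shows that elements of $\mathcal{P}_\rho^d$ approximate, on $K$ and to within $Ce^{-cd}$, every function holomorphic on the sublevel set $\{V^*(\cdot,K)<R\}\supset\overline{D}$ that is bounded by $1$ on $\overline{D}$; comparing this with the weak asymptotics $e^{-m^{1/n}}$ of the Kolmogorov diameters used in the proof above forces $N_d\succ d^n$. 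Hence $N_d\asymp d^n$. Completing, at each stage, a basis of $\mathcal{P}_\rho^{d-1}$ to one of $\mathcal{P}_\rho^{d}$ (possible because polynomials are dense) produces a degree-adapted basis $\{p_m\}$ whose degrees $d_m:=\deg p_m$ satisfy $d_m\asymp m^{1/n}$.

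With this basis in hand I would apply Dynin--Mitiagin: $\{p_m\}$ identifies $(O(X),\rho)$ with the K\"othe space of matrix $(\|p_m\|_k)$, and the relations $\lim_k k^{-1}\ln\|p_m\|_k=d_m$ together with $d_m\asymp\alpha_m$ show this space is tamely isomorphic to $\Lambda_\infty(d_\bullet)$, hence to $\Lambda_\infty(\alpha)$. On the target side $\|z^\alpha\|_{P_k}\asymp e^{k|\alpha|}$ identifies $O(\mathbb{C}^n)$ with $\Lambda_\infty(\gamma)$, where $\gamma$ enumerates the numbers $|\alpha|$ ordered by size, so $\gamma_m\sim(n!)^{1/n}m^{1/n}$. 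Choosing $C=c_0/(n!)^{1/n}$ makes the rescaled exponents satisfy $d_m/C\asymp\gamma_m$, and since the counting functions then agree asymptotically ($\#\{m:d_m\le d\}\sim(d/c_0)^n\sim\#\{\alpha:|\alpha|\le d/C\}$) I can fix a bijection $\sigma$ between the basis index set and the multi-indices matching degrees up to a bounded error, i.e. $|\sigma(m)|\asymp d_m/C$. The map $T(p_m)=z^{\sigma(m)}$ is then the coordinate permutation induced by $\sigma$ between the two K\"othe pictures; the degree matching makes $T$ and $T^{-1}$ tame as maps $(O(X),C\rho)\to O(\mathbb{C}^n)$, and because $\{p_m\}$ is adapted to the degree filtration while $\sigma$ respects degrees, $T$ carries $\rho$-polynomials onto genuine polynomials on $\mathbb{C}^n$ and back.

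The main obstacle is the middle step: producing a polynomial basis whose growth matrix $(\|p_m\|_k)$ is \emph{tamely}, not merely topologically, equivalent to the standard weights $e^{k\alpha_m}$ of $\Lambda_\infty(\alpha)$. The topological equivalence is automatic from Dynin--Mitiagin once $\{p_m\}$ is a basis, but tameness requires controlling the off-diagonal interaction of the adapted basis, for which I would either choose the successive completions to be growth-extremal (maximizing the $\|\cdot\|_k$-growth subject to lying in $\mathcal{P}_\rho^{d}$ and being suitably transverse to the previously chosen vectors) or appeal to the structure theory for tame bases of power series spaces. Securing the lower bound $N_d\succ d^n$ from regularity, through the Bernstein--Walsh/Kolmogorov-diameter comparison, is the other delicate point.
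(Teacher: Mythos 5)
Your plan stalls at the very point the paper's proof is designed to overcome: you never establish that your degree-adapted family is a \emph{basis}. Completing a linear-algebra basis of $\mathcal{P}_\rho^{d-1}$ to one of $\mathcal{P}_\rho^{d}$ and using regularity only yields a linearly independent family with dense span; in a nuclear Fr\'echet space a dense, degree-filtered, linearly independent sequence need not be a Schauder basis, and Dynin--Mitiagin (which you invoke to pass to the K\"othe picture) applies only \emph{after} a basis is in hand. Compounding this, even granting a basis, your tameness step is circular: the per-index limits $\lim_k k^{-1}\ln\|p_m\|_k = d_m$ give no uniformity in $m$, whereas tame equivalence to $\Lambda_\infty$-weights requires two-sided estimates $e^{(k-A)d_m}\prec \|p_m\|_k \prec e^{(k+A)d_m}$ with one $A$ for all $k,m$. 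You correctly flag this as ``the main obstacle,'' but the proposed remedies (growth-extremal completions, structure theory of tame bases) are gestures, not arguments, so the proof does not close.

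The paper resolves both difficulties simultaneously by a perturbation argument you do not have. It starts from the tame isomorphism $S:O(\mathbb{C}^n)\to (O(X),\rho)$ of \cite{AS2} (built on Vogt's construction), arranged so that $f_m=S(z^{\sigma(m)})$ is orthonormal in an auxiliary Hilbert space $H_0$; tameness of $S$ already gives the needed uniform weight estimates for $\{f_m\}$. Regularity is then used not to build a basis from scratch but to choose polynomials $p_m$ so close to $f_m$ (e.g. $\|f_m-p_m\|_{k_1}\leq \frac{1}{2C_1}\|f_m\|_1$) that the substitution operator $Q(\sum\beta_m f_m)=\sum\beta_m p_m$ satisfies: $Q-I$ is compact by nuclearity, hence $Q$ is Fredholm, and the smallness estimate forces injectivity, so $Q$ is an isomorphism carrying the basis $\{f_m\}$ to the polynomial basis $\{p_m\}$. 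Tameness of $Q^{-1}$ is then extracted via Zaharyuta's interpolation theorem on a Hilbert scale $(H_t)$, which shows $\{f_m\}$ is a basis of $O(D_s)$ for large $s$ and that $Q$ is invertible there, yielding $\|Q^{-1}(f)\|_k\prec\|f\|_{k+1}$; finally $T=Q\circ S^{-1}$. Note also that your dimension asymptotics $\dim\mathcal{P}_\rho^d\asymp d^n$ (whose lower bound via a Bernstein--Walsh estimate you assert but do not prove) play no role in the paper's argument: no a priori control of $\deg p_m$ is needed, since the tame estimates are inherited from $S$ rather than derived from degree counting.
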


\begin{proof}
We choose a Hilbert \ space $H_{0}$ with%
\begin{equation*}
O \left( \left\{ z:\,\,\rho \leq 0 \right\} \right) \hookrightarrow
H_{0}\hookrightarrow O \left( \left\{ z:\,\,\rho <0 \right\} \right) \cap
C\left( \left\{ z:\,\,\rho \leq 0 \right\} \right). 
\end{equation*}%
In view of Corollary 1 of \cite{AS2}, and the construction of the proof of
Th.1.5 \cite{V3} on which the proof of the corollary depends, we can without
loss of generality assume that there is a tame isomorphism $S:\,\,O\left( 
\mathbb{C}^{n}\right) \rightarrow \left( O\left( X\right) ,{\rho }\right)$ 
such that the sequence $\left\{ \ f_{m} = S\left( z^{\sigma \left( m\right)
}\right) \right\} $ forms an orthonormal basis for $H_{0}.$

Now we will choose and fix a bijection $\sigma ,$ between $\mathbb{N}$ and $%
\mathbb{N}^{n}$ satisfying $\left\vert \sigma \left( n\right) \right\vert
\leq $ $\left\vert \sigma \left( n+1\right) \right\vert ,$ $\forall n\in 
\mathbb{N}$. Observe that the identity operator gives a tame isomorphism
between $O\left( \mathbb{C}^{n}\right) $ with the canonical grading and $%
\left( O\left( \mathbb{\mathbb{C}}^{n}\right) ,{\left\vert {\cdot}%
\right\vert _{k}}\right), $ where 
\begin{equation*}
\left\vert f\right\vert _{k} = \sum_{n}\left\vert x_{n}\right\vert \text{ }%
e^{k\left\vert \sigma \left( n\right) \right\vert },\text{ \ }\forall \text{ 
}f=\sum_{s}x_{s}z^{\sigma \left( s\right) }\in O\left( \mathbb{C}^{n}\right) 
\end{equation*}%
in view of the Cauchy estimates.

In this case tameness of $S$ provides a positive integer $A,$ such that for
all $k=1,2,...$ 
\begin{equation*}
\left\Vert S\left( f\right) \right\Vert _{k}\prec \sum_{s}\left\vert
x_{s}\right\vert \text{ }e^{\left( k+A\right) \left\vert \sigma \left(
s\right) \right\vert }, 
\end{equation*}%
\begin{equation*}
\sum_{s}\left\vert x_{s}\right\vert \text{ }e^{k\left\vert \sigma \left(
s\right) \right\vert }\prec \left\Vert S\left( f\right) \right\Vert _{k+A}, 
\end{equation*}%
where as usual $\left\Vert f\right\Vert _{k } = \sup_{z\in D_{k }}\left\vert
f\left( z\right) \right\vert ,$ and $\ D_{k } = \left( z:\rho \left(
z\right) \leq k \right).\,$ Since the sequence $\left\{ f_{m}\right \} $
constitutes a basis for $O\left( X\right) ,$ there is a $C_{1}>0$ and $k_{1},
$ so that $\ ,$ 
\begin{equation*}
\sum_{m}\left\vert \beta _{m}\right\vert \left\Vert f_{m}\right\Vert
_{1}\prec \sum_{m}\left\vert \beta _{m}\right\vert e^{\left( 1+A\right)
\left\vert \sigma \left( m\right) \right\vert }\leq C_{1}\left\Vert
\sum_{m}\beta _{m}f_{m}\right\Vert _{k_{1}} 
\end{equation*}%
for every $\ f=\sum_{m}\beta _{m}f_{m}$ $\in O\left( X\right) .$ We choose,
using regularity, polynomials $p_{m},$ $m=1,2,... \,\,$so that 
\begin{equation*}
\left\Vert f_{m}-p_{m}\right\Vert _{m}\leq e^{\left\vert \sigma \left(
m\right) \right\vert },\text{ \ \ }m=1,2,... 
\end{equation*}%
and%
\begin{equation*}
\left\Vert f_{m}-p_{m}\right\Vert _{k_{1}}\leq \frac{1}{2C_{1}}\left\Vert
f_{m}\right\Vert _{1},\text{ \ \ }m=1,2,...\,\text{.} 
\end{equation*}%
For $k>A+1$ and $m\geq k,$ 
\begin{equation*}
\left\Vert p_{m}\right\Vert _{k}\leq \left\Vert f_{m}\right\Vert
_{k}+\left\Vert f_{m}-p_{m}\right\Vert _{m}\leq \left\Vert f_{m}\right\Vert
_{k}+e^{\left\vert \sigma \left( m\right) \right\vert }\prec \left\Vert
f_{m}\right\Vert _{k}. 
\end{equation*}%
Hence for every $k$ large enough , there is a $c_{k}>0$ such that 
\begin{equation*}
\left\Vert p_{m}\right\Vert _{k}\leq c_{k}\left\Vert f_{m}\right\Vert _{k},%
\text{ }\forall m. 
\end{equation*}%
It follows that the operator $Q$ defined by, 
\begin{equation*}
Q\left( \sum_{m}\beta _{m}f_{m}\right) = \sum_{m}\beta _{m}p_{m} 
\end{equation*}%
defines a continuous linear operator from $O\left( X\right) $ into itself.
Moreover for a given $g=\sum_{m}\theta _{m}f_{m}$ in $O\left( X\right) $ and 
$k$ large enough$,$ 
\begin{equation*}
\left\Vert \left( Q-I\right) \left( g\right) \right\Vert _{k}=\left\Vert
\left( Q-I\right) \left( \sum_{m}\theta _{m}f_{m}\right) \right\Vert
_{k}\leq \sum_{m}\left\vert \theta _{m}\right\vert \left\Vert
f_{m}-p_{m}\right\Vert _{k} \leq 
\end{equation*}%
\begin{equation*}
\leq \sum\limits_{m=1}^{k}\left\vert \theta _{m}\right\vert \left\Vert
f_{m}-p_{m}\right\Vert _{k}+\sum\limits_{n=k+1}^{\infty }\left\vert \theta
_{m}\right\vert \left\Vert f_{m}-p_{m}\right\Vert _{k} \leq 
\end{equation*}%
\begin{equation*}
\leq \sup_{1\leq m\leq k}\left( \frac{\left\Vert f_{m}-p_{m}\right\Vert _{k}%
}{\left\Vert f_{m}\right\Vert _{1}}\right) \sum\limits_{m=1}^{k}\left\vert
\theta _{m}\right\vert \left\Vert f_{m}\right\Vert
_{_{1}}+\sum\limits_{m=k+1}^{\infty }\left\vert \theta _{m}\right\vert
e^{\left\vert \sigma \left( m\right) \right\vert }\prec \left\Vert
g\right\Vert _{k_{1}}. 
\end{equation*}%
In view of nuclearity of $O\left( X\right) ,$ the above estimates imply that 
$Q-I$ is a compact operator. In particular $Q$ is Fredholm.

Now suppose there is an $f=\sum\limits_{m}d_{m}f_{m},\,\,$such that $Q\left(
f\right) =0.\,\,$We estimate:%
\begin{equation*}
\left\Vert \sum\limits_{m}d_{m}f_{m}\right\Vert _{k}=\left\Vert
\sum\limits_{m}d_{m}\left( f_{m}-p_{m}\right) \right\Vert _{k}\leq \sum
\left\vert d_{m}\right\vert \left\Vert \left( f_{m}-p_{m}\right) \right\Vert
_{_{k}} \leq 
\end{equation*}%
\begin{equation*}
\leq \frac{1}{2C_{1}}\sum_{m}\left\vert d_{m}\right\vert \left\Vert
f_{m}\right\Vert _{k}\leq \frac{1}{2}\left\Vert
\sum_{m}d_{m}f_{m}\right\Vert _{_{k}}. 
\end{equation*}%
It follows that $Q$ is one to one and hence an isomorphism. ( see \cite{ED},
p.671). Moreover we have:%
\begin{equation*}
\left\Vert Q\left( \sum_{m}d_{m}f_{m}\right) \right\Vert _{k}=\left\Vert
\sum_{m}d_{m}p_{m}\right\Vert _{k}\leq \sum_{m}\left\vert d_{m}\right\vert
\left\Vert f_{m}\right\Vert _{k}\prec \left\Vert
\sum_{m}d_{m}f_{m}\right\Vert _{k+2A}. 
\end{equation*}

We claim that $Q$ is a tame isomorphism. In order to examine the continuity
estimates of $Q^{-1}$ we shall once again, turn our attention to the
operator $S.$ Consider the Hilbert scale $\left( H_{t}\right) _{t\geq 0}$, \ 
\begin{equation*}
H_{t} = \left\{ \xi =\left( \xi _{m}\right) _{m}:\left\vert \xi \right\vert
_{t} = \left( \sum\limits_{m}\left\vert \xi _{m}\right\vert
^{2}e^{2t\left\vert \sigma \left( m\right) \right\vert }\right) ^{\frac{1}{2}%
}<\infty \right\} ,\text{ \ \ \ \ \ }t\geq 0. 
\end{equation*}%
Fix a number $A^{-}\,\,$close to $A$ yet $A^{-}<A.\,$ The operator $S$ , for
large $k$ induces maps:%
\begin{equation*}
H_{k}\rightarrow O\left( D_{k-A^{-}}\right) 
\end{equation*}%
\begin{equation*}
H_{0}\rightarrow O\left( D_{0}\right) \cap C\left( \overline{D}_{0}\right) . 
\end{equation*}%
In view of Zaharyuta interpolation theorem \cite{Z3}, $S$ extends to be
continuous from $H_{tk}$ into $O\left( D_{t\left( k-A^{-}\right) }\right) $
for each $0\leq t\leq 1.\,\,$Similarly $S^{-1}$, for large $k$ induces maps:%
\begin{equation*}
O\left( \overline{D}_{_{k}}\right) \rightarrow H_{k-A}, 
\end{equation*}%
\begin{equation*}
O\left( \overline{D}_{_{0}}\right) \rightarrow H_{0} 
\end{equation*}%
and again by Zaharyuta interpolation theorem, $S^{-1}$ extends to be
continuous from $O\left( \overline{D}_{tk}\right) $ into $H_{t\left(
k-A\right) }$ for large $k.\,\,$Fix a large $s$ and consider an $\,\,%
\overline{s}<s$ but near $\,\,s.$ Choosing $ks$ as large as needed, we see
that $S$ maps $O\left( \overline{D}_{\overline{s}}\right) $ into $H_{%
\overline{s}}$ continuously and $S^{-1}$ maps $H_{\overline{s}}$ into $%
O\left( D_{_{\overline{s}}}\right) $ continuously. In particular, the
sequence $\left\{ f_{m}\right\}$ forms a basis for the Fr\'{e}chet space $%
O\left( D_{_{s}}\right) .$

For $s\geq k_{1}$ arguing as above, we have$,$

\begin{equation*}
\left\Vert p_{m}\right\Vert _{s}\leq \left\Vert f_{m}\right\Vert
_{s}+\left\Vert p_{m}-f_{m}\right\Vert _{s}\prec \left\Vert f_{m}\right\Vert
_{s}. 
\end{equation*}%
In particular these estimates and the fact that $\left\{ f_{m}\right\}$
forms a basis for $O\left( D_{_{s}}\right) $ allows us to conclude that for
large $s$, the operator $Q$ extends to a continuous operator from $O\left(
D_{_{s}}\right) $ into itself. The argument given above for the
invertibility of $Q$ on $O\left( X\right) $ applies for $Q,$ this time as an
operator from $O\left( D_{_{s}}\right) $ into itself. This in turn will give
us bounds on the continuity estimates of $Q^{-1}.$ Namely for large $k$ we
have 
\begin{equation*}
\left\Vert Q^{-1}\left( f\right) \right\Vert _{k}\prec \left\Vert
f\right\Vert _{k+1}. 
\end{equation*}%
Hence $Q$ is a tame endomorphism of $\left( O\left( X\right) ,{\rho }%
\right).\,\,$Now let $T=Q\circ S^{-1}.$This finishes the proof of the
theorem.
\end{proof}

\begin{remark}
1. The proof given above shows something more, namely that the polynomial
basis found also constitute bases for the Fr\'{e}chet spaces $O\left(
\left\{ z: \rho \left( z\right) <s\right\} \right),\,\,$ for $s$ large.
\end{remark}

\bigskip

\begin{remark}
2. If we only assume that the Stein manifold $X$ is ${S}$-parabolic then in
view of Theorem \ref{thm29} we can choose a Fr\'{e}chet space isomorphism $S 
$ from $O\left( \mathbb{C}^{n}\right) $, $n=dim$ $X,$ onto $O\left( X\right)
.$ The general argument given in the first part of the proof of the above
theorem is valid in this set up so as a corollary of the proof of the
theorem we have:
\end{remark}

\begin{corollary}
Let $\left( X,{\rho }\right) $ be a regular $S-parabolic$ Stein manifold of
dimension n. Then there exits an isomorphism from $O\left( \mathbb{C}%
^{n}\right) $ onto $O\left( X\right) $ that maps polynomials into $\rho-$%
polynomials. In particular $O\left( X\right) $ has a basis consisting of $%
\rho-$polynomials.
\end{corollary}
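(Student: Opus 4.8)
The plan is to derive the Corollary from the proof of the preceding theorem, feeding in a mere Fr\'{e}chet space isomorphism where that proof used a tame one, and discarding its entire second half (the Hilbert scale and Zaharyuta interpolation estimates), whose only purpose was to upgrade the isomorphism $Q$ to a \emph{tame} one. Since an $S$-parabolic manifold is in particular parabolic, Theorem~\ref{thm29} furnishes a Fr\'{e}chet space isomorphism $S:O(\mathbb{C}^{n})\to O(X)$. Fixing a bijection $\sigma:\mathbb{N}\to\mathbb{N}^{n}$ with $|\sigma(m)|\le|\sigma(m+1)|$ and setting $f_{m}=S(z^{\sigma(m)})$, the sequence $\{f_{m}\}$ is a Schauder basis of the nuclear Fr\'{e}chet space $O(X)$.

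The first step is to recover, from nuclearity alone, the coefficient estimates that the theorem extracted from tameness. By the Dynin--Mitiagin theorem the correspondence $g=\sum_{m}\beta_{m}f_{m}\leftrightarrow(\beta_{m})_{m}$ identifies $O(X)$ with the K\"{o}the space whose seminorms are $\sum_{m}|\beta_{m}|\,\|f_{m}\|_{k}$; in particular each such K\"{o}the seminorm is continuous, so for every $k$ there exist $C_{k}>0$ and an index $k'$ with $\sum_{m}|\beta_{m}|\,\|f_{m}\|_{k}\le C_{k}\|g\|_{k'}$. Writing $C_{1},k_{1}$ for the data of the case $k=1$, I would then use regularity to choose polynomials $p_{m}$ with $\|f_{m}-p_{m}\|_{m}\le\|f_{m}\|_{1}$ and $\|f_{m}-p_{m}\|_{k_{1}}\le\frac{1}{2C_{1}}\|f_{m}\|_{1}$ for every $m$; both demands are met because polynomials are dense in $O(X)$ and $\|\cdot\|_{k_{1}}\le\|\cdot\|_{m}$ once $m\ge k_{1}$.

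With these choices the operator $Q\left(\sum_{m}\beta_{m}f_{m}\right)=\sum_{m}\beta_{m}p_{m}$ is well defined and continuous exactly as in the theorem: for $m\ge k$ one has $\|p_{m}\|_{k}\le\|f_{m}\|_{k}+\|f_{m}-p_{m}\|_{m}\le 2\|f_{m}\|_{k}$, whence $\|Q(g)\|_{k}$ is dominated by a K\"{o}the seminorm and so by some $\|g\|_{k'}$. The compactness of $Q-I$ follows from the bound $\|(Q-I)(g)\|_{k}\prec\|g\|_{k_{1}}$ (with $k_{1}$ independent of $k$) together with the nuclearity of $O(X)$, and the injectivity of $Q$ follows from the estimate $\|f_{m}-p_{m}\|_{k_{1}}\le\frac{1}{2C_{1}}\|f_{m}\|_{1}$; both arguments are the ones in the theorem and go through verbatim once the tame bounds are replaced by the K\"{o}the estimates above. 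Since $Q-I$ is compact, $Q$ is Fredholm, and its injectivity forces it to be a Fr\'{e}chet space isomorphism of $O(X)$. Finally $\Phi=Q\circ S:O(\mathbb{C}^{n})\to O(X)$ is an isomorphism carrying each monomial $z^{\sigma(m)}$ to the polynomial $p_{m}$, hence carrying every polynomial to a $\rho$-polynomial, and $\{p_{m}\}=\{\Phi(z^{\sigma(m)})\}$ is a basis of $O(X)$ consisting of $\rho$-polynomials.

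The point needing care --- and the only place where the $S$-parabolic case genuinely departs from the theorem --- is to confirm that each appeal to tameness in the original construction of $Q$ (its continuity, the compactness of $Q-I$, and its injectivity) can be rerouted through the Dynin--Mitiagin isomorphism and nuclearity, so that no lower tame bound on $S$ is ever invoked. Once this is verified the argument is in fact shorter than that of the theorem, because the delicate Hilbert-scale interpolation used there to produce tame estimates on $Q^{-1}$ is entirely unnecessary here.
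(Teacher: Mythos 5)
Your proposal is correct and follows essentially the same route as the paper, which derives the Corollary in Remark 5.10 by taking a Fr\'{e}chet space isomorphism $S$ from Theorem \ref{thm29} and rerunning the first (non-tame) half of the preceding theorem's proof verbatim, discarding the Hilbert-scale interpolation. Your explicit repairs --- replacing the tame bound $e^{|\sigma(m)|}\prec\|f_m\|_k$ by the choice $\|f_m-p_m\|_m\le\|f_m\|_1$ and sourcing the estimate $\sum_m|\beta_m|\|f_m\|_1\le C_1\|g\|_{k_1}$ from Dynin--Mitiagin continuity alone --- are exactly the details the paper leaves implicit in asserting that the general argument remains valid.
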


\end{document}